\documentclass[11pt,a4paper]{article}
\usepackage{amsmath,amssymb,amsthm,amsfonts,latexsym,graphicx,subfigure}
\usepackage[all,import]{xy}
\usepackage[numbers,sort&compress]{natbib}
\usepackage{mathrsfs}
\usepackage{indentfirst}
\usepackage[inline]{enumitem}
\usepackage{fancyhdr,enumerate}
\usepackage{bbm,color}
\usepackage[inline]{enumitem}
\usepackage{tikz}
\usetikzlibrary{decorations.pathreplacing,calc}
\usetikzlibrary{shapes.geometric, arrows}
\usepackage{accents,cases}
\usepackage[T1]{fontenc}
\usepackage{lmodern}
\usepackage{multirow}
\usepackage[lined, ruled, linesnumbered, longend]{algorithm2e}
\usepackage[labelsep=space]{caption}
\usepackage{array,float}
\usepackage{hyperref}
\newcommand{\PreserveBackslash}[1]{\let\temp=\\#1\let\\=\temp}
\newcolumntype{C}[1]{>{\PreserveBackslash\centering}p{#1}}
\newcolumntype{R}[1]{>{\PreserveBackslash\raggedleft}p{#1}}
\newcolumntype{L}[1]{>{\PreserveBackslash\raggedright}p{#1}}

\makeatletter
\def\wbar{\accentset{{\cc@style\underline{\mskip8mu}}}}

\makeatother

\theoremstyle{plain}
\newtheorem{theorem}{Theorem}

\newtheorem{Conj}{Conjecture}[section]
\newtheorem{lemma}{Lemma}
\newtheorem{remark}{Remark}[section]

\newtheorem{Question}{Question}

\tikzset{ vtx/.style={ circle, fill=black, inner sep=1.7pt } }

\allowdisplaybreaks

\begin{document}

\title{New bounds for equiangular lines and Balla's conjecture}
\author{Chuanyuan Ge\textsuperscript{a,\ensuremath{\ast}} \and Shiping Liu\textsuperscript{b} }

\footnotetext[0]{ \textsuperscript{*}Corresponding author.}
\footnotetext[0]{\textsuperscript{a}School of Mathematics and Statistics, Fuzhou University, Fuzhou 350108, China.\\
Email addresses:
{\tt cyge@fzu.edu.cn} }
\footnotetext[0]{\textsuperscript{b}School of Mathematical Sciences,
University of Science and Technology of China, Hefei 230026, China. \\
Email addresses:
{\tt spliu@ustc.edu.cn}
}
\date{}\maketitle

\begin{abstract}
Let $N_{\alpha}(d)$ denote the maximum number of equiangular lines in $\mathbb{R}^d$ with common angle $\arccos(\alpha)$. Balla conjectured that, if the spectral radius order $\kappa_{\frac{1-\alpha}{2\alpha}}$ of $\frac{1-\alpha}{2\alpha}$ is finite, then 
  $$N_{\alpha}(d)\leq \max\left\{\frac{(1-\alpha^2)(1-2\alpha^2)}{2\alpha^4},\left\lfloor\frac{\kappa_{\frac{1-\alpha}{2\alpha}}(d-1)}{\kappa_{\frac{1-\alpha}{2\alpha}}-1}\right\rfloor\right\},$$
 for  any $d\geq 1$. The conjecture has previously been verified only for $\alpha\in\left\{\frac{1}{3},\frac{1}{5},\frac{1}{1+2\sqrt{2}}\right\}$. In this paper, we prove that this conjecture holds for $\alpha=\frac{1}{1+2\sqrt{3}}$ and $\alpha=\sqrt{5}-2$. On the other hand, we show that Balla's conjecture  fails for infinitely many $\alpha$.
\end{abstract}

\section{Introduction}
A set of lines in $\mathbb{R}^d$ passing through the origin is called equiangular if the angle between any two lines is the same. Equiangular lines and their variants are closely related to a variety of research topics, see \cite{Balla-Draxler-Keevash-Sudakov-18,balla2024equiangular-exponential-regime,balla2021equiangular,Cao-Koolen-Lin-Yu,Jiang-Tidor-Yao-Zhang-Zhao-21,Glazyrin-Yu-18} and the references therein.


Let $N_{\alpha}(d)$ denote the maximum cardinality of a set of equiangular lines in $\mathbb{R}^d$ with common angle $\arccos(\alpha)$. In 1973, Lemmens and Seidel \cite{Lemmens-Seidel-73} initiated a study of quantity $N_{\alpha}(d)$. They completely determined the values of $N_{1/3}(d)$ for all $d\geq 1$, and conjectured that 
\[N_{\frac{1}{5}}(d)=\begin{cases}
    276 & \ 23\leq d\leq 185,\\
    \left\lfloor \frac{3(d-1)}{2}\right\rfloor & \ d\geq 185.
\end{cases}\]
 Neumaier \cite{Neumaier-89} proved that this conjecture holds for sufficiently large $d$. In 2022, Cao, Koolen, Lin, and Yu \cite{Cao-Koolen-Lin-Yu} completely confirmed this conjecture.
The authors \cite{ge-liu-2026lemmens} have found a new proof of this conjecture using nodal domain estimates .

 For general $\alpha$, Bukh \cite{Bukh-16} proved that $N_{\alpha}(d)\leq C_{\alpha}d$ for $d\geq 1$, where $C_{\alpha}$ is a constant that depends only on $\alpha$. Balla, Dräxler, Keevash, and Sudakov \cite{Balla-Draxler-Keevash-Sudakov-18} proved that $N_{\alpha}(d)\leq 1.93d$ for $d$ sufficiently large and $\alpha\neq 1/3$. In 2020, Jiang and Polyanskii \cite{Jiang-Polyanskii-20} introduced the notion of \emph{spectral radius order} and used it to determine the limit
\begin{equation}\label{eq:limit}
\lim_{d\to\infty}\frac{N_{\alpha}(d)}{d},   
\end{equation}
for \(\ \alpha\in\left(\frac{1}{1+2\sqrt{2+\sqrt{5}}},1\right)\).
 The spectral radius order $\kappa_\lambda$ of a real number $\lambda\in (0,\infty)$, is defined to be the smallest integer $k$, such that there exists a graph $G$ with $k$ vertices and largest adjacency eigenvalue $\lambda$.
 If no graph has largest adjacency eigenvalue $\lambda$, we set $\kappa_{\lambda}=\infty$. The following theorem was established by
Jiang, Tidor, Yao, Zhang, and Zhao \cite{Jiang-Tidor-Yao-Zhang-Zhao-21},
and yields a complete determination of the limit
\eqref{eq:limit}
for all \(\alpha\in(0,1)\).
\begin{theorem}[\cite{Jiang-Tidor-Yao-Zhang-Zhao-21}]\label{thm:JTYZZ21}
Let \(\alpha\in(0,1)\) and set \(\lambda:=\frac{1-\alpha}{2\alpha}\).
If \(\kappa_{\lambda}<\infty\), then
    \begin{equation}\label{eq:kappa_lambda_finite}
    N_{\alpha}(d)=\left\lfloor\frac{\kappa_{\lambda}(d-1)}{\kappa_{\lambda}-1}\right\rfloor,   
    \end{equation}
for \(d>2^{2^{C\kappa_\lambda/\alpha}}\),  where \(C\) is an absolute constant.
If, otherwise, \(\kappa_{\lambda}=\infty\), then
    \(
    N_{\alpha}(d)=d+o_{\alpha}(d).
    \)
\end{theorem}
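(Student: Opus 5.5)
The plan is to translate equiangular lines into a spectral problem for signed graphs and then handle the lower and upper bounds separately, following the strategy of Jiang, Tidor, Yao, Zhang and Zhao.

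\emph{Reduction to Gram matrices.} Given $N$ equiangular lines with angle $\arccos\alpha$, choose unit vectors $v_1,\dots,v_N$ spanning them. Their Gram matrix is
\[(1-\alpha)I+\alpha J-2\alpha A_G\]
after switching. Here $A_G$ is the adjacency matrix of the graph of negative inner products, and the switching will be chosen to make $G$ structurally nice. This matrix is positive semidefinite with rank at most $d$. Equivalently, $\lambda I - A_G + \tfrac12 J$ is PSD of rank at most $d$, where $\lambda=\frac{1-\alpha}{2\alpha}$. Hence
\[N-d \le \operatorname{mult}(\lambda, A_G)+1,\]
and $A_G-\tfrac12 J$ has largest eigenvalue at most $\lambda$.

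\emph{Lower bound.} Take a graph $H$ on $\kappa_\lambda$ vertices with spectral radius $\lambda$, and let $G$ be a disjoint union of $m$ copies of $H$. Since $J$ has rank one, the matrix $\lambda I - A_G + \tfrac12 J$ is PSD. Its kernel contains the $\lambda$-eigenvectors of $A_G$ orthogonal to $\mathbf 1$, which have dimension about $m-1$. Padding with extra vertices so that the total count works out gives the formula
\[N=\left\lfloor \frac{\kappa_\lambda(d-1)}{\kappa_\lambda-1}\right\rfloor,\]
together with a suitable treatment of the rounding. When $\kappa_\lambda=\infty$, simply taking $d$ lines plus an orthonormal-type construction already gives $N\ge d$.

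\emph{Upper bound.} This is the main work. First, using the rank bound together with Ramsey-type arguments (as in Balla--Dräxler--Keevash--Sudakov), switch so that $G$ has bounded maximum degree $\Delta=\Delta(\alpha)$ after deleting a bounded number of vertices. The key step is the sublinear second-eigenvalue-multiplicity theorem: a connected graph of maximum degree $\Delta$ on $n$ vertices has
\[\operatorname{mult}(\lambda_2)=O\!\left(\frac{n}{\log\log n}\right).\]
We would prove this by bounding closed walks. Pruning the graph to a small ball removes the top eigenvector's influence, and then comparing $\operatorname{tr} A^{2k}$ with the multiplicity gives the bound.

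\emph{Counting by components.} Apply this to each component of $G$. A component whose top eigenvalue equals $\lambda$ must have at least $\kappa_\lambda$ vertices and contributes one to the multiplicity, plus a sublinear error from its $\lambda_2$. Components with top eigenvalue exceeding $\lambda$ can number at most one, because $A_G-\tfrac12J$ has largest eigenvalue at most $\lambda$ and $J$ has rank one. Summing over components yields
\[\operatorname{mult}(\lambda)\le \frac{N}{\kappa_\lambda}+o(N),\]
which rearranges to $N\le \frac{\kappa_\lambda}{\kappa_\lambda-1}d+o(d)$. When $\kappa_\lambda=\infty$, the same count gives $N\le d+o(d)$.

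\emph{Expected obstacle.} Getting the exact floor value, rather than the asymptotic bound, requires eliminating the $o(d)$ error for $d$ doubly exponential in $\kappa_\lambda/\alpha$. To do this, one shows that large components with eigenvalue $\lambda$ have $\lambda$ only as a simple top eigenvalue, with no second-eigenvalue contribution, once their size is bounded in terms of $\Delta$. Components of unbounded size contribute sublinearly and are absorbed by a stability argument. This quantitative step, controlling the bounded-degree structure and the sizes, is what produces the $2^{2^{C\kappa_\lambda/\alpha}}$ threshold, and it is where I expect the main difficulty.
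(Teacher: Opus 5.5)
The paper does not prove this statement; it quotes it from \cite{Jiang-Tidor-Yao-Zhang-Zhao-21}, so the comparison is with that argument. Your lower bound is fine once the padding is made explicit. Write $d-1=(\kappa_\lambda-1)m+s$ with $0\le s<\kappa_\lambda-1$ and take $G=mH\cup sK_1$. The matrix is PSD because $\lambda I-A_G\succeq 0$, not because $J$ has rank one. Your asymptotic upper bound follows the JTYZZ outline, although their main new ingredient, sublinear second-eigenvalue multiplicity, is only gestured at.

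The genuine gap is the exact value. From $N-d\le 1+m_\lambda(A_G)$ and $m_\lambda(A_G)\le N/\kappa_\lambda+o(N)$ you only get $N\le\frac{\kappa_\lambda}{\kappa_\lambda-1}(d+1)+o(d)$. No stability argument removes this additive error while the main term $N/\kappa_\lambda$ and the error term $m_{\lambda_2}(A_{G_1})$ can occur together. The missing idea is that they cannot occur together; split according to $\lambda_1(A_G)$.

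\emph{Case $\lambda_1(A_G)\le\lambda$.} Then $\lambda I-A_G\succeq 0$, so $\ker(\lambda I-A_G+\frac12 J)=\ker(\lambda I-A_G)\cap\mathbf 1^{\perp}$. Here $\ker(\lambda I-A_G)$ is spanned by the nonnegative Perron vectors of the $c\le N/\kappa_\lambda$ components of spectral radius exactly $\lambda$, and none of them is orthogonal to $\mathbf 1$. So the nullity is $c-1$ (or $0$ if $c=0$), giving $N\le\kappa_\lambda(d-1)/(\kappa_\lambda-1)$ (resp.\ $N\le d$) exactly. This case needs no degree bound and no multiplicity theorem.

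\emph{Case $\lambda_1(A_G)>\lambda$,} attained on a component $G_1$. Let $f,g$ be Perron vectors of $G_1$ and of another component $G_i$, and test the PSD form on $h=f-cg$ with $c>0$ chosen so that $\mathbf 1^Th=0$. This gives $(\lambda-\lambda_1(A_{G_1}))\|f\|^2+c^2(\lambda-\lambda_1(A_{G_i}))\|g\|^2\ge 0$. Hence every other component has spectral radius strictly below $\lambda$; this is the argument of Lemma~\ref{lemma:seondeigen}. The $\kappa_\lambda$-term therefore vanishes entirely, and $N\le d+1+m_\lambda(A_{G_1})\le d+1+C_\Delta N/\log\log N$. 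This contradicts $N>\kappa_\lambda(d-1)/(\kappa_\lambda-1)$ once $N/\kappa_\lambda\ge 2+C_\Delta N/\log\log N$. Inverting that inequality is what produces the doubly exponential threshold, not control of ``sizes''. The paper's proof of Theorem~\ref{thm:main} uses exactly this case split.

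Your diagnosis of the obstacle is also off. A component with top eigenvalue $\lambda$ has $\lambda$ as a simple eigenvalue by Perron--Frobenius, whatever its size or degree, and its $\lambda_2$ contributes nothing; there is nothing to prove there.

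Finally, bounding the degree only after deleting a bounded set $U$ is not harmless for the exact count. You have $m_\lambda(A_{G_1})\le|U|+m_\lambda(A_{G_1-U})$, and $G_1-U$ may split into many components of spectral radius exactly $\lambda$, to which the sublinear bound does not apply. You need the degree bound on $G_1$ itself. Switching every vector to agree in sign with most of a large clique of pairwise inner product $+\alpha$ (found by Ramsey) does bound all degrees once $N$ is large.
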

We remark that the absolute constant \(C\) in Theorem~\ref{thm:JTYZZ21} is quite large, and the lower bound on \(d\) required for \eqref{eq:kappa_lambda_finite} is far from optimal. For example, when \(\alpha=1/3\), one has \(\kappa_\lambda=\kappa_1=2\), and \eqref{eq:kappa_lambda_finite} already holds for \(d\geq 28\) by the result of Lemmens and Seidel \cite{Lemmens-Seidel-73}. When \(\alpha=1/5\), one has \(\kappa_\lambda=\kappa_2=3\), and \eqref{eq:kappa_lambda_finite} holds for \(d\geq 185\) by the result of Cao, Koolen, Lin, and Yu \cite{Cao-Koolen-Lin-Yu}.

Thus, it is natural to ask whether the condition
\(
  d>2^{2^{C\kappa_\lambda/\alpha}}
\)
can be substantially improved. Balla \cite{balla2021equiangular} improved this condition to
\(
  d>2^{(1/\alpha)^{C\kappa_\lambda}}.
\)
When \(\alpha=\frac{1}{2k+1}\) for some \(k\in\mathbb Z_{>0}\), Balla and Buci\'c \cite{balla2024equiangular-exponential-regime} further improved it to
\(
  d>2^{(1/\alpha)^{20}}.
\) If $\alpha\geq \alpha_0>\frac{1}{1+3\sqrt{2}}$, the authors \cite{ge-liu-2024equiangular} improved it to $d>C_{\alpha_0}\kappa_{\lambda}$, where $C_{\alpha_0}$ is a constant depending only on $\alpha_0$.
Balla \cite{balla2021equiangular} also proposed the following conjecture.
\begin{Conj}[{\cite[Conjecture 1.7]{balla2021equiangular}}]\label{conj:balla}
   Let $\alpha\in (0,1)$ and $\lambda:=\frac{1-\alpha}{2\alpha}$. If $\kappa_{\lambda}<\infty$, then $$N_{\alpha}(d)\leq \max\left\{\frac{(1-\alpha^2)(1-2\alpha^2)}{2\alpha^4},\left\lfloor\frac{\kappa_{\lambda}(d-1)}{\kappa_{\lambda}-1}\right\rfloor\right\},$$
for any $d\geq 1$.
\end{Conj}
The conjecture is known to hold for \(\alpha=1/3\) and \(\alpha=1/5\) by
\cite{Cao-Koolen-Lin-Yu,Lemmens-Seidel-73}, and for
\[
\alpha=\frac{1}{1+2\sqrt{2}}
\]
by recent work of Gossett, Jiang, Teets, and Wellner
\cite{gossett-2026}. Balla
\cite{balla2021equiangular} proved that, if \(\kappa_\lambda<\infty\), then
\[
  N_{\alpha}(d)
  \leq
  \max\left\{
    \frac{(1-\alpha^2)(1-2\alpha^2)}{2\alpha^4},
    2d
  \right\}
\]
for every \(d\geq 1\).
 
 We remark that for $\alpha\in(0,1)$, $\kappa_{\frac{1-\alpha}{2\alpha}}<4$ if and only if $\alpha\in\left\{\frac{1}{3},\frac{1}{5},\frac{1}{1+2\sqrt{2}}\right\}$. Therefore, Conjecture~\ref{conj:balla} is already known in all cases of spectral radius order less than \(4\). In this paper, we study Conjecture \ref{conj:balla} when the spectral radius order of $\frac{1-\alpha}{2\alpha}$ equals $4$. We prove that Conjecture \ref{conj:balla} holds for
\[
  \alpha=\frac{1}{1+2\sqrt{3}}
  \qquad\text{and}\qquad
  \alpha=\sqrt{5}-2.
\]
We note that
\[
  \left\lfloor
    \frac{(1-\alpha^2)(1-2\alpha^2)}{2\alpha^4}
  \right\rfloor
  =
  \begin{cases}
    169, & \alpha=\frac{1}{1+2\sqrt{3}},\\
    135, & \alpha=\sqrt{5}-2,
  \end{cases}
\]
and that \(\kappa_{\frac{1-\alpha}{2\alpha}}=4\) for both values of \(\alpha\).
Thus Balla's conjecture gives the following explicit bounds, which are the main
positive results of this paper.
\begin{theorem}\label{thm:main}
For every positive integer \(d\), we have
\[
  N_{\frac{1}{1+2\sqrt{3}}}(d)
  \leq
  \max\left\{
    169,
    \left\lfloor \frac{4(d-1)}{3}\right\rfloor
  \right\},
\]
and
\[
  N_{\sqrt{5}-2}(d)
  \leq
  \max\left\{
    135,
    \left\lfloor \frac{4(d-1)}{3}\right\rfloor
  \right\}.
\]
\end{theorem}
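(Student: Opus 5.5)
We follow the standard switching-and-Gram-matrix framework. Given $N$ equiangular lines in $\mathbb{R}^d$ with angle $\arccos\alpha$, choose unit vectors $v_1,\dots,v_N$ whose Gram matrix is $I+\alpha S$, where $S$ is a Seidel matrix ($\pm1$ off the diagonal). We write $\lambda=\frac{1-\alpha}{2\alpha}$, so $\lambda=\sqrt3$ for $\alpha=\frac{1}{1+2\sqrt3}$ and $\lambda=\frac{1+\sqrt5}{2}$ for $\alpha=\sqrt5-2$. Positive semidefiniteness together with $\operatorname{rank}\le d$ says that $\frac{1}{\alpha}I+S$ is PSD of rank at most $d$. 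Switching to a graph $G$ with adjacency matrix $A$ via $S=J-I-2A$, the condition becomes the statement that
\[
M:=\lambda I-A+\tfrac12 J
\]
is PSD with rank at most $d$, i.e.\ $N-d\le \operatorname{mult}(0,M)$. We then choose the switching class representative as in Balla's argument: take a vertex $v$ and switch so that all vectors have positive inner product with $v_v$. With this normalization, $G$ restricted to the remaining vertices has $\lambda I-A$ PSD on the complement of $\mathbf 1$, so every connected component with spectral radius exceeding $\lambda$ is controlled, and at most one component can violate $\lambda_1\le\lambda$.

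First step: handle the ``large'' regime. If $N>\frac{(1-\alpha^2)(1-2\alpha^2)}{2\alpha^4}$, Balla's argument (the relative bound, or counting the forbidden subgraph structure) forces the Gram structure into the case where $G$, after deleting a bounded number of vertices, is a disjoint union of graphs with spectral radius at most $\lambda$. In that case the multiplicity of $0$ in $M$ equals, up to a correction of at most $1$ coming from the rank-one $\frac12 J$ perturbation, the number of components of $G$ whose largest eigenvalue is exactly $\lambda$. Since $\kappa_\lambda=4$, each such component has at least $4$ vertices, giving $N-d\le \frac{N-1}{4}+1$ roughly, i.e.\ $N\le\lfloor\frac{4(d-1)}{3}\rfloor$. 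More precisely, we will prove
\[
\operatorname{mult}(0,M)\le \frac{N-1}{4},
\]
which rearranges exactly to the claimed bound.

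Second step: the careful accounting. We classify connected graphs with $\lambda_1\le\lambda$ for the two values; these are small, explicit families (subgraphs of Smith-type graphs: for $\lambda=\sqrt3$ and the golden ratio, finitely many shapes plus paths, stars $K_{1,3}$, the path $P_4$, etc.). For each component $C$ we compute $\operatorname{mult}(\lambda,A_C)$ and $|C|$, and check the ratio $\le 1/4$ except in configurations where the $\frac12J$ term kills an eigenvector. Using eigenvalue interlacing for the rank-one update, we show that the exceptional vertex $v$ together with the $\mathbf 1$-correction costs at most one vertex, giving the $-1$.

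The main obstacle is the first step: bridging the gap between the threshold $169$ (resp.\ $135$) and the point where the structural decomposition holds, since the known theorems only give structure for astronomically large $d$. Here we would try a direct argument: bound the maximum degree of $G$ via a Balla-type inequality $\Delta\le$ const (in terms of $\alpha$), then show that a component with spectral radius $>\lambda$ would force $\lambda I-A+\frac12J$ to have a negative direction unless the component contains almost all vertices, in which case the trace/Frobenius relative bound $N\le\frac{d(1-\alpha^2)}{1-d\alpha^2}$-style estimate yields $N\le 169$ (resp.\ $135$). We expect a case analysis, possibly computer-assisted, on the bounded-size ``bad'' part to be necessary.
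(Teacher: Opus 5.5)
There is a genuine gap, and it sits in the step you yourself flag as the main obstacle. The proposal names, but never supplies, a maximum-degree bound. It also gives no way to bound the multiplicity of $\lambda$ inside a component whose spectral radius exceeds $\lambda$.

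\textbf{The normalization and the source of $169$/$135$.} Your normalization is not Balla's. Switching so that every vector has positive inner product with a fixed $v_v$ only makes $v$ isolated and gives no degree control; switching about a vertex typically makes its former neighbours adjacent to almost everything. What works is to switch so that a top eigenvector $x$ of the Gram matrix $M_{\mathscr C}$ is nonnegative, and then split on $\lambda_1(M_{\mathscr C})$.
\begin{itemize}
\item If $\lambda_1(M_{\mathscr C})\le\frac{1-\alpha^2}{2\alpha^2}$, then $\operatorname{tr}(M_{\mathscr C}^2)\le\lambda_1(M_{\mathscr C})\operatorname{tr}(M_{\mathscr C})$ gives $N\le\frac{(1-\alpha^2)(1-2\alpha^2)}{2\alpha^4}$. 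This is where $169$ and $135$ come from.
\item Otherwise, Balla's projection inequality gives a uniform lower bound $x(u)\ge Q>0$. Combined with the eigenvalue equation $((J-2A_G)x)(u)=\frac{\lambda_1(M_{\mathscr C})-1+\alpha}{\alpha}\,x(u)$ and $\langle\mathbf 1,x\rangle\le\sqrt N$, an explicit optimization yields $\Delta_G\le 8$ (resp.\ $7$) once $N>169$ (resp.\ $135$).
\end{itemize}
Your suggested substitute cannot do this job. The relative bound $N\le\frac{d(1-\alpha^2)}{1-d\alpha^2}$ is only available for $d<1/\alpha^2\approx 19.9$. Nor does a component of spectral radius $>\lambda$ create a negative direction of $\lambda I-A+\frac12J$; for example, $G=C_{12}$ with $\lambda=\sqrt3$ is positive semidefinite. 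This is exactly the case $\lambda=\lambda_2(A_G)<\lambda_1(A_G)$, and it must be handled by a multiplicity bound rather than excluded.

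\textbf{The accounting.} Components with $\lambda_1\le\lambda$ need no classification. By Perron--Frobenius, each component with spectral radius exactly $\lambda$ contributes exactly one to $m_\lambda(A_G)$ and has at least $\kappa_\lambda=4$ vertices; the others contribute nothing. What you are missing are two facts.
\begin{itemize}
\item[(a)] If $\lambda<\lambda_1(A_G)$, positive semidefiniteness forces \emph{every} component other than the top one to have spectral radius strictly below $\lambda$. To see this, evaluate the form on $f-cg\perp\mathbf 1$, where $f,g$ are Perron vectors of the top component and of another one. This is stronger than your ``at most one component exceeds $\lambda$''. Without it the two kinds of contributions can add up, and you only get $N\le\frac{4}{3}d+O(1)$.
\item[(b)] For the top component $G_1$, $m_\lambda(A_{G_1})\le 37$ (resp.\ $26$). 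Under $\Delta\le 8$ (resp.\ $7$), a copy $V_0$ of $D_5$ (resp.\ $D_4$) in $G_1$ has $|\overline N(V_0)|\le 37$ (resp.\ $26$). The separation lemma forces spectral radius $<\lambda$ on $G_1-\overline N(V_0)$, and interlacing gives the bound. Hence $N\le d+38$ (resp.\ $d+27$).
\end{itemize}

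\textbf{The off-by-one.} In the case where $\lambda$ is the top eigenvalue, the rank-one correction goes the other way from what you wrote. Here $\ker M=\ker(\lambda I-A_G)\cap\ker J$, and the nonnegative Perron vectors are not in $\ker J$. So with $l$ the number of components of spectral radius $\lambda$, $\operatorname{mult}(0,M)\le l-1\le\frac N4-1$, which yields $N\le\frac{4(d-1)}{3}$. Your target $\operatorname{mult}(0,M)\le\frac{N-1}{4}$ only gives $N\le\frac{4d-1}{3}$, one more than the claimed bound.
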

On the other hand, we show that Conjecture \ref{conj:balla} fails for infinitely many values of \(\alpha\). 
\begin{theorem}\label{thm:counterexample}
    Let $\alpha_n=\frac{1}{1+4\cos\frac{\pi}{n}}$ and $\lambda_n=\frac{1-\alpha_n}{2\alpha_n}$. There exists an constant $n_0$, such that for any odd integer $n>n_0$, we have $\kappa_{\lambda_n}<\infty$ and $$N_{\alpha_n}(2n-2)> \max\left\{\frac{(1-\alpha_n^2)(1-2\alpha_n^2)}{2\alpha_n^4},\left\lfloor\frac{\kappa_{\lambda_n}(2n-3)}{\kappa_{\lambda_n}-1}\right\rfloor\right\}.$$
\end{theorem}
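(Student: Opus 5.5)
The plan is to compute both quantities on the right-hand side explicitly and then exhibit $2n$ equiangular lines in $\mathbb{R}^{2n-2}$ with angle $\arccos(\alpha_n)$.

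\emph{Step 1: $\lambda_n$ and $\kappa_{\lambda_n}$.} First, $\lambda_n=\frac{1-\alpha_n}{2\alpha_n}=2\cos\frac{\pi}{n}$. This is the largest eigenvalue of the path $P_{n-1}$, so $\kappa_{\lambda_n}\le n-1<\infty$.

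For the matching lower bound, suppose $G$ is a graph on $k$ vertices with largest eigenvalue $2\cos\frac{\pi}{n}<2$. We may pass to a connected component attaining this largest eigenvalue; it has at most $k$ vertices. By Smith's classification, a connected graph with spectral radius less than $2$ is a simply-laced Dynkin diagram $A_m$, $D_m$, $E_6$, $E_7$ or $E_8$, and its spectral radius is $2\cos\frac{\pi}{h}$, where $h$ is the Coxeter number. These Coxeter numbers are $m+1$, $2m-2$, $12$, $18$ and $30$ respectively. Since $2\cos\frac{\pi}{h}$ is strictly increasing in $h$, we need $h=n$. For odd $n$ this rules out $D_m$ and the $E$-types, so the component is $A_{n-1}=P_{n-1}$, which has $n-1$ vertices. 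Hence $\kappa_{\lambda_n}=n-1$, and this is exactly where the oddness of $n$ is used.

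Consequently
\[
\left\lfloor\frac{(n-1)(2n-3)}{n-2}\right\rfloor=\left\lfloor 2n-1+\frac{1}{n-2}\right\rfloor=2n-1 \qquad (n>3).
\]

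\emph{Step 2: the other term.} As $n\to\infty$ we have $\alpha_n\downarrow\frac15$. On $(0,1/\sqrt2)$ the function $\alpha\mapsto\frac{(1-\alpha^2)(1-2\alpha^2)}{2\alpha^4}$ is decreasing, since each factor is. Since $\alpha_n>\frac15$, this term is therefore less than its value at $\alpha=\frac15$, namely $\frac{24}{25}\cdot\frac{23}{25}\cdot\frac{625}{2}=276$. So it suffices to show $N_{\alpha_n}(2n-2)\ge 2n$ with $2n>276$; taking $n_0=138$ works.

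\emph{Step 3: the construction.} Recall that $N$ equiangular lines with angle $\arccos\alpha$ in $\mathbb{R}^d$ correspond to a Seidel matrix $S$ of order $N$, whose Gram matrix $I+\alpha S$ is positive semidefinite of rank at most $d$. Equivalently, the smallest eigenvalue of $S$ is at least $-1/\alpha$, and $N$ minus the multiplicity of $-1/\alpha$ is at most $d$.

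Take $S=J-I-2A$, where $A$ is the adjacency matrix of the cycle $C_{2n}$. The eigenvalues of $A$ are $2\cos\frac{\pi j}{n}$ for $j=0,\dots,2n-1$, with common eigenvectors independent of $j$.
\begin{itemize}
\item On the all-ones vector, $S$ has eigenvalue $(2n-1)-4>0$.
\item On each eigenvector orthogonal to the all-ones vector (i.e.\ $j\neq0$), $S$ has eigenvalue $-1-4\cos\frac{\pi j}{n}$.
\end{itemize}
The minimum over $j\neq0$ is $-1-4\cos\frac{\pi}{n}=-1/\alpha_n$, attained exactly at $j=1$ and $j=2n-1$, so with multiplicity $2$.

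Thus $I+\alpha_nS$ is positive semidefinite of rank $2n-2$, which gives $2n$ equiangular lines in $\mathbb{R}^{2n-2}$. Hence
\[
N_{\alpha_n}(2n-2)\ge 2n>\max\{276,\,2n-1\},
\]
which proves the theorem.

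The main obstacle is the lower bound $\kappa_{\lambda_n}\ge n-1$ in Step~1. It relies on Smith's classification of graphs with spectral radius less than $2$, together with the list of Coxeter numbers. The remaining steps are direct computations.
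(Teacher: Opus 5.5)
Your proposal is correct and follows the paper's proof essentially step for step. You get $\kappa_{\lambda_n}=n-1$ from Smith's classification, with odd $n$ excluding $D_m$ and $E_{6},E_7,E_8$. You bound the first term by $276$ using $\alpha_n>\frac15$. And you build $2n$ lines in $\mathbb{R}^{2n-2}$ from $C_{2n}$: your Gram matrix $I+\alpha_n(J-I-2A)$ is exactly the paper's $\lambda_nI+\frac12J-A_{C_{2n}}$ up to the factor $2\alpha_n$. Passing to a connected component in the $\kappa$ lower bound and checking monotonicity explicitly for the $276$ bound make you slightly more careful than the paper's wording, but the argument is the same.
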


\section{Preliminaries}
Let $G=(V,E)$ be a graph. We write $u\sim v$ if $\{u,v\}\in E$, denote by $\deg_G(u)$ the degree of $u$ in $G$, and write $\Delta_G$ for the maximum degree of $G$. For a subset $V_0\subseteq V$, let
\[
N_G(V_0):=\{u\in V:\text{ there exists }v\in V_0\text{ with }u\sim v\},
\]
and set $\overline N_G(V_0):=V_0\cup N_G(V_0)$. We denote by $G[V_0]$ the subgraph of $G$ induced by $V_0$.

We denote by $P_n$ the path graph with $n$ vertices and by $C_n$ the cycle graph with $n$ vertices. For \(n\geq 4\), let \(D_n\) be the tree obtained from the path
\(v_1v_2\cdots v_{n-1}\) by attaching a pendant vertex \(u\) to \(v_2\), and
let \(E_n\) be the tree obtained from the same path by attaching a pendant
vertex \(u\) to \(v_3\). The graphs \(D_n\) and \(E_n\) are illustrated in
Figure~\ref{fig:DnEn}.
\begin{figure}[htbp]
    \centering

    \begin{minipage}{0.45\textwidth}
        \centering
        $D_n$

        \vspace{0.3cm}

        \begin{tikzpicture}[scale=1]
            \tikzset{
                vtx/.style={
                    circle,
                    fill=black,
                    inner sep=1.6pt
                }
            }

            \node[vtx, label=below:{$v_1$}] (a) at (0,0) {};
            \node[vtx, label=below:{$v_2$}] (b) at (1,0) {};
            \node[vtx, label=above:{$u$}]   (c) at (1,1) {};
            \node[vtx, label=below:{$v_3$}] (d) at (2,0) {};
            \node[vtx, label=below:{$v_{n-2}$}] (e) at (4,0) {};
            \node[vtx, label=below:{$v_{n-1}$}] (f) at (5,0) {};

            \draw (a)--(b);
            \draw (b)--(c);
            \draw (b)--(d);
            \draw[dashed] (d)--(e);
            \draw (e)--(f);
        \end{tikzpicture}
    \end{minipage}
    \hfill
    \begin{minipage}{0.45\textwidth}
        \centering
        $E_n$

        \vspace{0.3cm}

        \begin{tikzpicture}[scale=1]
            \tikzset{
                vtx/.style={
                    circle,
                    fill=black,
                    inner sep=1.6pt
                }
            }

            \node[vtx, label=below:{$v_1$}] (a) at (0,0) {};
            \node[vtx, label=below:{$v_2$}] (b) at (1,0) {};
            \node[vtx, label=below:{$v_3$}] (c) at (2,0) {};
            \node[vtx, label=above:{$u$}]   (u) at (2,1) {};
            \node[vtx, label=below:{$v_4$}] (d) at (3,0) {};
            \node[vtx, label=below:{$v_{n-2}$}] (e) at (5,0) {};
            \node[vtx, label=below:{$v_{n-1}$}] (f) at (6,0) {};

            \draw (a)--(b)--(c)--(d);
            \draw[dashed] (d)--(e);
            \draw (e)--(f);
            \draw (c)--(u);
        \end{tikzpicture}
    \end{minipage}

    \caption{The graphs $D_n$ and $E_n$.}
    \label{fig:DnEn}
\end{figure}

We denote by \(A_G\) the adjacency matrix of a graph \(G=(V,E)\). Its
eigenvalues are ordered as
\[
  \lambda_1(A_G)\geq \lambda_2(A_G)\geq \cdots \geq \lambda_n(A_G),
\]
where \(n=|V|\). We write \(m_\lambda(A_G)\) for the multiplicity of
\(\lambda\) as an eigenvalue of \(A_G\). As usual, \(I_n\) and \(J_n\) denote
the identity matrix and the all-one matrix of order \(n\), respectively, and
\(\mathbf 1_n\) denotes the all-one vector of dimension \(n\). When no confusion
arises, we omit the subscript \(n\).

We recall the following fundamental results from spectral graph theory, see, for example, \cite[Sections 1.4.3, 1.4.4, 3.1.1]{Brouwe-Haemers-12}.

\begin{lemma}\label{thm:smith}
Let $G$ be a connected graph. Then
$\lambda_1(A_G)<2$
if and only if $G$ is isomorphic to one of the following graphs:
$$
P_n\quad (n\geq 1),\qquad
D_n\quad (n\geq 4),\qquad
E_6,\ E_7,\ E_8.
$$
Furthermore, we have
\[
\lambda_1(P_n)=2\cos\frac{\pi}{n+1},
\qquad
\lambda_1(D_n)=2\cos\frac{\pi}{2(n-1)},
\]
\[
\lambda_1(E_6)=2\cos\frac{\pi}{12},\quad
\lambda_1(E_7)=2\cos\frac{\pi}{18},\quad
\lambda_1(E_8)=2\cos\frac{\pi}{30}.
\]
\end{lemma}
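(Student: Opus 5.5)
This is the classical Smith classification, cited from Brouwer--Haemers. I would prove it in two directions, using Perron--Frobenius monotonicity together with explicit eigenvectors.

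\emph{Sufficiency and the eigenvalue formulas.} For \(P_n\), the vector \(x_j=\sin\frac{j\pi}{n+1}\), \(j=1,\dots,n\), is positive. It satisfies \(x_{j-1}+x_{j+1}=2\cos\frac{\pi}{n+1}\,x_j\), using the convention \(x_0=x_{n+1}=0\). By Perron--Frobenius it is the Perron vector, so \(\lambda_1(P_n)=2\cos\frac{\pi}{n+1}<2\). For \(D_n\), I would use the folding trick. Take the path \(P_{2n-3}\) with central vertex \(c\) and its Perron vector, which is symmetric about \(c\). Identify the two halves from \(c\) outward and keep \(c\) together with the first vertex on each side; this gives a positive eigenvector of \(D_n\). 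A cleaner route is to exploit the \(\mathbb Z_2\)-quotient of the cycle \(C_{2n-2}\) by a reflection fixing two antipodal edges' endpoints. Either way one obtains eigenvalue \(2\cos\frac{\pi}{2(n-1)}\), and positivity identifies it as \(\lambda_1\). For \(E_6,E_7,E_8\), I would compute the characteristic polynomials directly and observe that \(2\cos\frac{\pi}{h}\) (with \(h=12,18,30\)) is the largest root. Equivalently, I would exhibit the positive eigenvector built from \(\sin(k\pi/h)\)-type entries; this is the Coxeter-number fact.

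\emph{Necessity.} I would first list the connected graphs with \(\lambda_1=2\). These are \(C_n\), \(\widetilde D_n\) (a path with two pendant vertices at each end), \(\widetilde E_6\) (three arms of length 2), \(\widetilde E_7\) (arms of lengths \(1,3,3\)), \(\widetilde E_8\) (arms of lengths \(1,2,5\)), and \(K_{1,4}\) as \(\widetilde D_4\). Each has an explicit positive integer vector \(x\) with \(Ax=2x\): all ones on \(C_n\), entries \(1,2,\dots\) decreasing along the arms for the stars, and so on. Hence each has \(\lambda_1=2\). Since \(\lambda_1\) of a connected graph strictly increases when passing to a connected proper supergraph, and weakly increases for subgraphs, any connected \(G\) with \(\lambda_1(G)<2\) contains none of these as a subgraph.

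It then remains to do the combinatorics. Having no cycle, \(G\) is a tree. Having no \(K_{1,4}\), its maximum degree is at most 3. Having no \(\widetilde D_n\), it has at most one vertex of degree 3. If there is no degree-3 vertex, \(G\) is a path. Otherwise \(G\) is a star-like tree with arm lengths \(a\le b\le c\). Excluding \(\widetilde E_6\) forces \(a=1\). Excluding \(\widetilde E_7\) then forces \(b\le 2\). If \(b=1\), we get \(D_n\). If \(b=2\), excluding \(\widetilde E_8\) forces \(c\le 4\), which gives \(E_6,E_7,E_8\).

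I expect the main obstacle to be purely bookkeeping: verifying the extended Dynkin eigenvectors, and the \(D_n\) eigenvalue via folding. Both are routine, so in the paper one would simply cite \cite{Brouwe-Haemers-12}.
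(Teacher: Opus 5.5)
Your plan is the standard proof of Smith's theorem. It forbids the Smith graphs $C_n$, $\widetilde D_n$, $\widetilde E_6,\widetilde E_7,\widetilde E_8$ and uses Perron--Frobenius monotonicity, which is what the paper relies on by citing Brouwer--Haemers. The necessity direction and the case analysis on arm lengths are correct and complete.

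The one step that fails as written is the $D_n$ eigenvalue.
\begin{itemize}
\item Your folding recipe does not produce $D_n$. Folding $P_{2n-3}$ about its centre $c$ gives a path on $n-1$ vertices, with a doubled edge at $c$. If you instead keep $c$ and its two neighbours separate and identify the rest, you get a $4$-cycle with a tail.
\item What works is to place the half $c+1,\dots,2n-3$ of the Perron vector $\sin\frac{j\pi}{2n-2}$ on $v_2,\dots,v_{n-1}$, and split the central value $x_c=1$ equally between the two leaves. Explicitly, $x_{v_1}=x_u=\tfrac12$ and $x_{v_j}=\cos\frac{(j-1)\pi}{2(n-1)}$ for $2\le j\le n-1$.
\item Set $\theta=\frac{\pi}{2(n-1)}$. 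The eigenvector check at $v_2$ is $2\cos^2\theta=1+\cos2\theta$. The interior vertices use $2\cos\theta\cos((j-1)\theta)=\cos(j\theta)+\cos((j-2)\theta)$. The end vertex $v_{n-1}$ uses $\cos\frac{(n-1)\pi}{2(n-1)}=0$.
\end{itemize}

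Your alternative route via $C_{2n-2}$ cannot work. Its eigenvalues are $2\cos\frac{\pi k}{n-1}$, and none of these equals $2\cos\frac{\pi}{2(n-1)}$. Every eigenvalue of an orbit (equitable) quotient is an eigenvalue of the original graph, so no quotient of $C_{2n-2}$ can have the required eigenvalue. You would need $C_{4n-4}$ instead, or a $(2n-2)$-cycle with one negative edge.
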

\begin{lemma}\label{lemma:Cn}
Let \(C_n\) be the cycle graph on \(n\geq 3\) vertices. Then the eigenvalues
of its adjacency matrix \(A_{C_n}\) are
\[
  2\cos\left(\frac{2\pi k}{n}\right),
  \qquad k=0,1,\ldots,n-1,
\]
counted with multiplicity. 
\end{lemma}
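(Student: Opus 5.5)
We prove this by writing down an explicit eigenbasis for the circulant matrix $A_{C_n}$. Label the vertices of $C_n$ by $\mathbb{Z}/n\mathbb{Z}$ so that $j\sim j\pm 1$. Then
\[
  (A_{C_n}x)_j=x_{j-1}+x_{j+1}
  \quad\text{for every vector } x\in\mathbb{C}^n .
\]
The hypothesis $n\geq 3$ ensures $j+1\neq j-1$, so the two neighbours are distinct and there are no double edges.

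Fix a primitive $n$-th root of unity $\omega=e^{2\pi i/n}$. For each $k\in\{0,1,\ldots,n-1\}$, define $x^{(k)}\in\mathbb{C}^n$ by $x^{(k)}_j=\omega^{jk}$. This is well defined on $\mathbb{Z}/n\mathbb{Z}$ because $\omega^{nk}=1$. A direct computation gives
\[
  (A_{C_n}x^{(k)})_j=\omega^{(j-1)k}+\omega^{(j+1)k}=(\omega^{k}+\omega^{-k})\,\omega^{jk}=2\cos\left(\frac{2\pi k}{n}\right)x^{(k)}_j .
\]
Hence $x^{(k)}$ is an eigenvector with eigenvalue $2\cos(2\pi k/n)$.

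It remains to check that these $n$ eigenvectors account for the whole spectrum with multiplicity. The vectors $x^{(0)},\ldots,x^{(n-1)}$ are the columns of the Vandermonde (discrete Fourier) matrix $(\omega^{jk})_{j,k}$. This matrix is invertible because the nodes $\omega^0,\ldots,\omega^{n-1}$ are distinct; equivalently, its columns are orthogonal, since $\sum_j\omega^{j(k-l)}=n\delta_{kl}$. So the $x^{(k)}$ form a basis of $\mathbb{C}^n$ consisting of eigenvectors, and $A_{C_n}$ is diagonalized with diagonal entries $2\cos(2\pi k/n)$, $k=0,\ldots,n-1$.

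Since $A_{C_n}$ is real symmetric, its eigenvalues and their multiplicities are the same over $\mathbb{R}$ and over $\mathbb{C}$. This gives the claimed multiset. For instance, the values for $k$ and $n-k$ coincide, and one may take real eigenvectors $\mathrm{Re}\,x^{(k)}$ and $\mathrm{Im}\,x^{(k)}$ for this common eigenvalue. There is no real obstacle in this argument; the only point requiring care is the multiplicity bookkeeping, which the basis argument settles.
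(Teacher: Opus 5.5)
Your proof is correct: the Fourier vectors $x^{(k)}_j=\omega^{jk}$ are eigenvectors of $A_{C_n}$ with eigenvalues $2\cos(2\pi k/n)$, and their linear independence (Vandermonde, or orthogonality) gives the full spectrum with multiplicities. The paper does not prove this lemma itself but cites it as a standard fact from Brouwer--Haemers, where it is obtained by the same root-of-unity eigenvector computation, so your approach matches the intended one.
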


A spherical \(\{-\alpha,\alpha\}\)-code in \(\mathbb R^d\) is a set of unit
vectors
\[
  \mathscr C=\{v_1,v_2,\dots,v_n\}\subseteq \mathbb R^d
\]
such that \(\langle v_i,v_j\rangle\in\{-\alpha,\alpha\}\) whenever \(i\neq j\).
We denote its Gram matrix by
\[
  M_{\mathscr C}:=\bigl(\langle v_i,v_j\rangle\bigr)_{1\leq i,j\leq n}.
\]
To a spherical code, we associate a graph \(G=(V,E)\) by setting
\begin{equation}\label{eq:graphs_associated_code}
  V=\{v_1,v_2,\dots,v_n\},
  \qquad
  E=\bigl\{\{v_i,v_j\}:\langle v_i,v_j\rangle=-\alpha\bigr\}.
\end{equation}
A direct computation gives
\begin{equation}\label{eq:M_C}
  \frac{1}{2\alpha}M_{\mathscr C}
  =
  \frac{1-\alpha}{2\alpha}I+\frac{1}{2}J-A_G.
\end{equation}
The following lemma relates equiangular lines, spherical codes, and the associated graphs.
\begin{lemma}[see, e.g., {\cite[Lemma 1]{ge-liu-2026lemmens}}]\label{lemma:equivalent}
For fixed $\alpha\in(0,1)$, the following are equivalent.
\begin{itemize}
    \item[$(i)$] There exist $n$ equiangular lines in $\mathbb{R}^d$ with common angle $\arccos(\alpha)$.
    \item[$(ii)$] There exists a spherical $\{-\alpha,\alpha\}$-code of size $n$ in $\mathbb{R}^d$.
    \item[($iii)$] There exists an $n$-vertex graph $G$ such that
  $$\frac{1-\alpha}{2\alpha}I+\frac12J-A_G$$
    is positive semidefinite and has rank at most $d$.
\end{itemize}
In particular, if $\mathscr C$ is a spherical $\{-\alpha,\alpha\}$-code of size $n$, and $G$ is its associated graph, then  $$\frac{1-\alpha}{2\alpha}I+\frac12J-A_G$$
is positive semidefinite and has rank at most $d$.
\end{lemma}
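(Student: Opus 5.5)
The plan is to prove the cycle of implications $(i)\Rightarrow(ii)\Rightarrow(iii)\Rightarrow(ii)\Rightarrow(i)$, with the identity \eqref{eq:M_C} doing most of the work. The "in particular" clause is exactly the content of $(ii)\Rightarrow(iii)$ once the graph is taken to be the associated graph \eqref{eq:graphs_associated_code}.

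\textbf{From lines to codes and back.} For $(i)\Rightarrow(ii)$, choose on each of the $n$ lines one of its two unit direction vectors. Since the angle between any two lines is $\arccos(\alpha)$, the inner product of two chosen vectors is $\pm\alpha$, so we obtain a spherical $\{-\alpha,\alpha\}$-code of size $n$. For $(ii)\Rightarrow(i)$, take the lines $\mathbb R v_i$. Since $|\langle v_i,v_j\rangle|=\alpha<1$ for $i\neq j$, no two $v_i$ are parallel, so these are $n$ distinct lines, and they are pairwise at angle $\arccos(\alpha)$.

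\textbf{From codes to graphs.} For $(ii)\Rightarrow(iii)$, let $G$ be the associated graph \eqref{eq:graphs_associated_code}. First verify \eqref{eq:M_C} entrywise:
\begin{itemize}
\item on the diagonal, $\frac{1}{2\alpha}=\frac{1-\alpha}{2\alpha}+\frac12$;
\item off the diagonal, the entry is $\frac{\alpha}{2\alpha}=\frac12$ for a non-edge and $-\frac12=\frac12-1$ for an edge.
\end{itemize}
The Gram matrix $M_{\mathscr C}$ is positive semidefinite, and its rank equals $\dim\operatorname{span}\{v_i\}\le d$. Scaling by $\frac{1}{2\alpha}>0$ preserves both properties, so $\frac{1-\alpha}{2\alpha}I+\frac12J-A_G$ is positive semidefinite of rank at most $d$.

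\textbf{From graphs to codes.} For $(iii)\Rightarrow(ii)$, let $M:=2\alpha\bigl(\frac{1-\alpha}{2\alpha}I+\frac12J-A_G\bigr)$. This matrix is positive semidefinite of rank $r\le d$. Factor it as $M=U^{\top}U$ with $U$ an $r\times n$ real matrix, for instance via the spectral decomposition. The columns $v_1,\dots,v_n\in\mathbb R^r\subseteq\mathbb R^d$ of $U$ then have Gram matrix $M$. Reading off the entries:
\begin{itemize}
\item the diagonal entries are $(1-\alpha)+\alpha=1$, so each $v_i$ is a unit vector;
\item the off-diagonal entries are $\alpha$ for non-adjacent pairs and $\alpha-2\alpha=-\alpha$ for adjacent pairs.
\end{itemize}
Hence $\{v_i\}$ is a spherical $\{-\alpha,\alpha\}$-code of size $n$ in $\mathbb R^d$.

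There is no real obstacle here. The only points needing care are noting that rank is preserved under positive scaling and that a rank-$r$ Gram factorization embeds into $\mathbb R^d$ because $r\le d$.
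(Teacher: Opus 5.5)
Your proposal is correct and follows essentially the route the paper relies on. The paper does not reprove this lemma: it records the entrywise identity \eqref{eq:M_C} and cites the equivalence. Your cycle of implications is the standard argument that identity supports: choose unit direction vectors, rescale the Gram matrix by $\frac{1}{2\alpha}>0$, and recover a code from a Gram factorization of rank at most $d$.
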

\section{Proof of Theorem \ref{thm:main}}
\subsection{Eigenvalue multiplicity estimates}
In this subsection, we prove the following eigenvalue multiplicity estimates which will be used in the proof of Theorem \ref{thm:main}.
\begin{lemma}\label{lemma:multi}
Let $G=(V,E)$ be a connected graph.
\begin{itemize}
    \item[$(i)$] If $\lambda_2(A_G)=\sqrt{3}$ and $\Delta_G\le 8$, then
\[
m_{\sqrt{3}}(A_G)\le 37.
\]
    \item[$(ii)$] If $\lambda_2(A_G)=\frac{1+\sqrt{5}}{2}$ and $
\Delta_G\le 7$,
then
\[
m_{\frac{1+\sqrt{5}}{2}}(A_G)\le 26.
\]
\end{itemize}
\end{lemma}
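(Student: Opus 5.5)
We bound the multiplicity by counting how many linear conditions force a $\lambda$-eigenvector to vanish, where $\lambda=\lambda_2(A_G)$ equals $\sqrt3$ in $(i)$ and $\frac{1+\sqrt5}{2}$ in $(ii)$. Two facts drive this. First, $\sqrt3=2\cos\frac{\pi}{6}=\lambda_1(P_5)$ and $\frac{1+\sqrt5}{2}=2\cos\frac{\pi}{5}=\lambda_1(P_4)$. Second, $\lambda<2$, so by Lemma~\ref{thm:smith} every connected graph whose spectral radius is below $\lambda$ is a small path, a small $D_n$, or none at all.

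\textbf{Step 1: every induced subgraph avoiding a small ball has small spectral radius.} Fix a vertex $v$ of maximum degree and set $U=\overline N_G(\{v\})$, so $|U|\le 1+\Delta_G$. If $H$ is a connected induced subgraph of $G-\overline N_G(U)$, then $H$ and $G[U]$ are disjoint and nonadjacent. By Cauchy interlacing applied to $G[U\cup V(H)]$ we get $\lambda_2(A_G)\ge \min\{\lambda_1(G[U]),\lambda_1(H)\}$. When $\Delta_G\ge 4$ (respectively $\ge 3$) we have $\lambda_1(G[U])\ge\sqrt{\Delta_G}>\lambda$, and therefore $\lambda_1(H)\le\lambda$. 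The small-degree case is easy: the paths and cycles that satisfy the degree bound can be handled directly from Lemmas~\ref{thm:smith} and~\ref{lemma:Cn}, and bounded degree $\le 3$ should give a multiplicity bound well below $37$. So every component of $G-\overline N_G(U)$ is a path or a $D_n$ with $\lambda_1\le\lambda$. For $\sqrt3$ these are $P_k$ with $k\le5$ and $D_4,D_5$, whose spectral radii are $\sqrt3$ and $2\cos\frac{\pi}{8}>\sqrt3$ respectively. So in fact only $P_k$ ($k\le5$) and $D_4$ survive. For the golden ratio the survivors are $P_k$ with $k\le4$.

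\textbf{Step 2: the multiplicity bound.} Let $W$ be the $\lambda$-eigenspace and put $S=\overline N_G(U)$. Restrict $x\in W$ to $S$. If $x|_S=0$, then on each component $H$ of $G-S$ the vector $x|_H$ is a $\lambda$-eigenvector of $A_H$. Since $\lambda=\lambda_1(H)$ is simple there, $x|_H$ is a Perron vector, positive or zero. Now take a component $H$ on which $x|_H\neq0$ and a vertex $w\in S$ adjacent to $H$. The equation at $w$ gives $0=\lambda x_w=\sum_{y\sim w}x_y$, which is a sum over components of $G-S$ with Perron-signed entries; this constrains the components adjacent to $w$. Consequently
\[
m_\lambda(A_G)\le |S|+\dim\{x\in W:\ x|_S=0\},
\]
and what remains is to bound the second term.

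The crude bound $|S|\le 1+\Delta+\Delta(\Delta-1)$ is about $65$ for $\Delta=8$, which is too weak. I would therefore refine Step 1 by choosing $U$ more cleverly. One option is to replace the closed neighbourhood of $v$ by a minimal connected induced subgraph $K$ with $\lambda_1(K)>\lambda$; for example a star $K_{1,4}$ for $\sqrt3$, or a $D_4$ for the golden ratio. Another is to use the interlacing bound $m_\lambda(A_G)\le m_\lambda(A_{G-S})+|S|$ with $S=N_G(K)$, where $|N_G(K)|$ is controlled by $\Delta_G\cdot|K|$. Then $G-\overline N(K)$ has all components with $\lambda_1\le\lambda$, while $G[K]$ has $\lambda_1>\lambda$. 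Since $\lambda_2=\lambda$, at most one extra eigenvalue above $\lambda$ is allowed, and this forces $m_\lambda(G-S)$ to be at most the number of components equal to $P_5$ or $D_4$ (respectively $P_4$). Those components must attach to $S$ without creating an eigenvalue above $\lambda$ disjoint from $K$, and that limits their number.

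\textbf{Main obstacle.} The hardest step is the exact optimization that brings the bound down to $37$ and $26$. This means choosing $K$ and counting $|N_G(K)|$ plus the number of extremal components, using the structure forced by $\lambda_2=\lambda$. I expect a case analysis on the local structure near a maximum-degree vertex, for instance whether $v$ lies in a triangle. Each case would be handled via interlacing with explicit small graphs whose spectral radii exceed $\lambda$, such as $K_{1,4}$, $D_5$, $E_6$ and $C_n$, with $\lambda$ checked against the values in Lemmas~\ref{thm:smith} and~\ref{lemma:Cn}.
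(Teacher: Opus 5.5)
\textbf{There is a genuine gap.} As written, the proposal does not establish the bounds $37$ and $26$; your ``Main obstacle'' paragraph says as much.

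The root cause is in Step 1. Plain Cauchy interlacing on $G[U\cup V(H)]$ only gives $\lambda_1(A_H)\le\lambda$. So components of $G-\overline N_G(U)$ with spectral radius exactly $\lambda$ survive: copies of $P_5$ or $D_4$ in $(i)$, and of $P_4$ in $(ii)$. That is what pushes you into Step 2, where you try to bound $\dim\{x\in W: x|_S=0\}$ through Perron-signed sums, and then into a case analysis that is never carried out.

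The missing idea is the \emph{strict} separation property in Lemma~\ref{lemma:edgedisjoint}. If $G$ is connected and $V_1,V_2$ are disjoint with no edges between them, then $\lambda_1(A_{G[V_1]})>\lambda_2(A_G)$ forces $\lambda_1(A_{G[V_2]})<\lambda_2(A_G)$. Here is why:
\begin{itemize}
\item Let $f_1,f_2\ge 0$ be Perron vectors of $G[V_1]$ and $G[V_2]$, extended by zero.
\item Pick $a,b>0$ with $h=af_1-bf_2$ orthogonal to the positive Perron vector of $G$. This is where connectivity enters.
\item Since there are no cross terms, $h^TA_Gh=a^2\lambda_1(A_{G[V_1]})\|f_1\|^2+b^2\lambda_1(A_{G[V_2]})\|f_2\|^2$.
\item Courant--Fischer gives $h^TA_Gh\le\lambda_2(A_G)\|h\|^2$, so $\lambda_1(A_{G[V_2]})\ge\lambda_2(A_G)$ is impossible.
\end{itemize}

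With strictness, $G-\overline N_G(V_0)$ has no eigenvalue equal to $\lambda$. Interlacing then gives $m_\lambda(A_G)\le|\overline N_G(V_0)|$ directly, and Step 2 is unnecessary. All that remains is to take $V_0$ small, which is the refinement you already suggested.
\begin{itemize}
\item In $(i)$, after disposing of $|V|\le 37$, suppose $\Delta_G\ge 3$. Then $G$ contains as a subgraph a $5$-vertex tree with a vertex of degree at least $3$, namely $D_5$ or $K_{1,4}$. Their spectral radii are $2\cos\frac{\pi}{8}$ and $2$, both larger than $\sqrt3$. The count gives $|\overline N_G(V_0)|\le 5+5\cdot 8-2\cdot 4=37$.
\item In $(ii)$, a copy of $D_4$ (spectral radius $\sqrt3>\frac{1+\sqrt5}{2}$) gives $|\overline N_G(V_0)|\le 4+4\cdot 7-2\cdot 3=26$.
\end{itemize}

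This choice also repairs your small-degree case, which is not handled as it stands. In $(i)$ with $\Delta_G=3$ you have $\sqrt{\Delta_G}=\lambda$, so your Step 1 hypothesis $\lambda_1(G[U])>\lambda$ fails. The sentence ``should give a bound well below $37$'' is not an argument. Using $D_5$ instead of the star at $v$ covers $\Delta_G=3$. That leaves only paths and cycles, where Lemmas~\ref{thm:smith} and~\ref{lemma:Cn} give multiplicity at most $2$.

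A minor point: in Step 2, $x|_H$ is a scalar multiple of the Perron vector of $H$, which may be negative, not ``positive or zero''.
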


We remark that the spectral radius of the star graph $D_4$ equals $\sqrt{3}$ and of the path graph $P_4$ equals $\frac{1+\sqrt{5}}{2}$.
To prove the above lemma, we first recall a separation lemma for induced subgraphs.


\begin{lemma}[{\cite[Lemma 2.2]{balla2024equiangular-exponential-regime}}]\label{lemma:edgedisjoint}
Let $G=(V,E)$ be a connected graph. If $V_1$ and $V_2$ are disjoint with no edge between $V_1$ and $V_2$, then 
\(\lambda_1(A_{G[V_1]})<\lambda_2(A_G)\), or \(\lambda_1(A_{G[V_2]})<\lambda_2(A_G)\), or
\[
\lambda_1(A_{G[V_1]})=\lambda_1(A_{G[V_2]})=\lambda_2(A_G).
\]
\end{lemma}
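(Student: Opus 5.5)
The plan is to use the Courant--Fischer characterization of \(\lambda_2(A_G)\) together with the Perron--Frobenius theorem, testing the Rayleigh quotient on a vector glued from the Perron vectors of the two induced subgraphs. Throughout, \(V_1\) and \(V_2\) are assumed nonempty, so that both spectral radii are defined.

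Let \(\mu_i:=\lambda_1(A_{G[V_i]})\) for \(i=1,2\). By Perron--Frobenius, choose a nonnegative, nonzero unit eigenvector \(x_i\) of \(A_{G[V_i]}\) for \(\mu_i\), and extend it by zero to a vector on \(V\). Since \(G\) is connected, its Perron vector \(\varphi\) can be taken entrywise positive. Consequently \(\langle x_1,\varphi\rangle>0\) and \(\langle x_2,\varphi\rangle>0\).

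Choose real numbers \(a,b\), not both zero, so that \(f:=ax_1+bx_2\) is orthogonal to \(\varphi\). The positivity of both inner products forces \(a\neq 0\) and \(b\neq 0\). The supports of \(x_1\) and \(x_2\) are disjoint, so \(\|f\|^2=a^2+b^2\). There are no edges between \(V_1\) and \(V_2\), so the cross terms vanish:
\[
f^{\top}A_Gf=a^2x_1^{\top}A_{G[V_1]}x_1+b^2x_2^{\top}A_{G[V_2]}x_2=a^2\mu_1+b^2\mu_2 .
\]

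Since \(\lambda_2(A_G)=\max_{0\neq g\perp\varphi} g^{\top}A_Gg/\|g\|^2\), we obtain
\[
\lambda_2(A_G)\;\ge\;\frac{a^2\mu_1+b^2\mu_2}{a^2+b^2}\;\ge\;\min\{\mu_1,\mu_2\}.
\]
The second inequality is strict whenever \(\mu_1\neq\mu_2\), because \(a\) and \(b\) are both nonzero.

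Now suppose the first two alternatives of the lemma fail, so \(\mu_1\ge\lambda_2(A_G)\) and \(\mu_2\ge\lambda_2(A_G)\). Then \(\min\{\mu_1,\mu_2\}\ge\lambda_2(A_G)\). Combined with the display, this gives \(\min\{\mu_1,\mu_2\}=\lambda_2(A_G)\), and equality must hold throughout. Equality in the second inequality rules out \(\mu_1\neq\mu_2\), so \(\mu_1=\mu_2=\lambda_2(A_G)\), which is the third alternative.

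The only delicate point is ensuring that both coefficients \(a\) and \(b\) are nonzero. This is exactly where connectivity of \(G\) (positivity of \(\varphi\)) and nonnegativity of the \(x_i\) are used. I expect no further obstacles.
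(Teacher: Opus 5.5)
Your proof is correct and complete. Positivity of the Perron vector of $G$, together with nonnegativity of the $x_i$, forces both coefficients to be nonzero, and that is what gives the strict inequality when $\mu_1\neq\mu_2$ and hence the equality case.

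The paper does not prove this lemma; it quotes it from Balla--Buci\'c. Your argument is the natural self-contained one. It uses the same device as the paper's proof of Lemma~\ref{lemma:seondeigen}, where the combination $f-cg$ of two Perron vectors with separated supports is made orthogonal to $\mathbf 1$ instead of to the Perron vector of $G$.
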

\begin{proof}[Proof of Lemma \ref{lemma:multi}]
$(i)$ If $|V|\le 37$, the conclusion is immediate. Therefore, we assume that $|V|\ge 38$.

\textbf{Case 1:} $\Delta_G\ge 3$.
 In this case, $G$ contains the graph $D_5$ as a subgraph. Let $V_0$ be its vertex set. By the monotonicity of the spectral radius, and Lemma \ref{thm:smith}, we have
\[
\lambda_1(A_{G[V_0]})\geq\lambda_1\left(A_{D_5}\right)>\lambda_1(A_{D_4})=\sqrt{3}.
\]
Since every vertex of $V_0$ has degree at most $8$, we obtain
\[
|\overline N_G(V_0)|\le 37.
\]
Since there is no edge between $V_0$ and $V\setminus \overline N_G(V_0)$, by Lemma \ref{lemma:edgedisjoint}, we deduce
\[
\lambda_1\bigl(A_{G[V\setminus \overline N_G(V_0)]}\bigr)<\sqrt{3}.
\]
The Cauchy interlacing theorem implies that the multiplicity of the eigenvalue $\sqrt{3}$ in $A_G$ is at most $|\overline N_G(V_0)|$, and hence at most $37$.

\textbf{Case 2:} $\Delta_G\le 2$.
In this case, $G$ is a path graph or a cycle graph.  We derive from Lemma \ref{thm:smith} and Lemma \ref{lemma:Cn} that
\[
m_{\sqrt{3}}(A_G)\le 2<37.
\]
This finishes the proof of $(i)$.

$(ii)$ We argue as in $(i)$ and only indicate the changes.
It suffices to assume that \(\lvert V\rvert\geq 27\). If \(\Delta_G\geq 3\), then \(G\) contains \(D_4\) as a subgraph. Let \(V_0\) be its vertex set. By the degree bound, we have
\(\lvert \overline N_G(V_0)\rvert\leq 26.\)
Moreover,
\(\lambda_1(D_4)=\sqrt{3}>\frac{1+\sqrt{5}}{2},
\)
and hence, by the same argument using Lemma \ref{lemma:edgedisjoint},
\[
  \lambda_1\left(A_{G[V\setminus \overline N_G(V_0)]}\right)
  <\frac{1+\sqrt{5}}{2}.
\]
The Cauchy interlacing theorem therefore gives
\(m_{\frac{1+\sqrt{5}}{2}}(A_G)\leq 26.
\)
If \(\Delta_G\leq 2\), then \(G\) is a path or a cycle, and
\(m_{\frac{1+\sqrt{5}}{2}}(A_G)\leq 2<26.
\)
This proves $(ii)$.
\end{proof}







\subsection{Bounding the maximum degree}
In Lemma \ref{lemma:multi}, we need the condition on $\Delta_G$. In this subsection, we shows that when the cardinality of the equiangular lines is sufficiently large, we can find an associated graph which satisfies the maximum degree condition. In fact, we prove the following theorem.
\begin{theorem}\label{thm:maximum-degree}
Let $\mathcal{L}=\{l_1,l_2,\ldots,l_n\}$ be  $n$ equiangular lines in $\mathbb{R}^d$ with common angle $\arccos(\alpha)$. 
\begin{itemize}
    \item[(i)] If $\alpha=\frac{1}{1+2\sqrt{3}}$ and $n>169$, then there exist a spherical $\{-\alpha,\alpha\}$-code of size $n$ in $\mathbb{R}^d$ such that the maximum degree of its associated graph does not exceed $8$.
\item[(ii)] If $\alpha=\sqrt{5}-2$ and $n>135$, then there exist a spherical $\{-\alpha,\alpha\}$-code of size $n$ in $\mathbb{R}^d$ such that the maximum degree of its associated graph does not exceed $7$.
\end{itemize}
\end{theorem}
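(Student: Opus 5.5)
Throughout, $\mathscr C=\{v_1,\dots,v_n\}$ is a spherical $\{-\alpha,\alpha\}$-code obtained by choosing a unit vector on each line $l_i$. Replacing $v_i$ by $-v_i$ switches the associated graph $G$ at $v_i$: every edge at $v_i$ becomes a non-edge and vice versa. I will use this switching freedom.

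\textbf{Step 1 (normalize at one vertex).} Fix a vertex $v$ and switch the neighbours of $v$. Afterwards every other vector has inner product $+\alpha$ with $v$, so $v$ is isolated in the new graph. Now project the remaining $n-1$ vectors onto $v^\perp$. This gives vectors $w_i=v_i-\alpha v$ with $\|w_i\|^2=1-\alpha^2$ and $\langle w_i,w_j\rangle\in\{\alpha-\alpha^2,\,-\alpha-\alpha^2\}$. This is the standard reduction behind Balla's bound.

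\textbf{Step 2 (degree bound via the relative bound).} Suppose some vertex $u$ has degree $D$ in $G$ (after any normalization). The neighbourhood of $u$ gives vectors whose pairwise structure is controlled by a PSD condition. Let $\lambda=\frac{1-\alpha}{2\alpha}$ and $M=\lambda I+\frac12J-A_G$. The matrix $M$ restricted to $\overline N_G(u)$ must be PSD, and $\lambda I-A_G$ restricted to $N_G(u)$ is PSD on the orthogonal complement of $\mathbf 1$. Testing $M$ against $x=t\,e_u+\mathbf 1_{N(u)}$ and optimizing over $t$ gives an upper bound on $D$ in terms of $\lambda$, together with the number of edges inside $N(u)$ and between $N(u)$ and the rest.

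A cleaner route is a counting argument combined with the relative (Delsarte) bound. Assume every choice of signs has some vertex of degree at least $9$ (respectively $8$). Choose the signs so that $|E(G)|$ is minimal. Then every vertex has degree at most $(n-1)/2$, and switching any set $S$ does not decrease the number of edges. I would then combine minimality with the Gram-matrix PSD condition. For a high-degree vertex $u$ with $D\ge 9$, the vectors $\{v_i: i\in N(u)\}$, switched to have inner product $+\alpha$ with $u$ and projected to $u^\perp$, form a code with inner products $\{\alpha-\alpha^2,-\alpha-\alpha^2\}$. The aim is to show that the number of such structures forces $n\le \frac{(1-\alpha^2)(1-2\alpha^2)}{2\alpha^4}$.

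\textbf{Step 3 (the key inequality).} Concretely, I would use the relative bound for the projected code of Step 1. For the $n-1$ projected unit vectors $\tilde w_i=w_i/\sqrt{1-\alpha^2}$ with inner products $\beta_\pm=\frac{\pm\alpha-\alpha^2}{1-\alpha^2}$, consider $\sum_{i,j}(\langle\tilde w_i,\tilde w_j\rangle-\beta_+)(\langle\tilde w_i,\tilde w_j\rangle-\beta_-)\ge0$. Expanding this with $\sum\langle\cdot,\cdot\rangle^2\ge (n-1)^2/d'$ and $\|\sum\tilde w_i\|^2\ge0$ yields a quadratic inequality. That inequality involves $n$ and the number $e$ of negative pairs, i.e.\ the edges of $G-v$. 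Choosing $v$ to be a vertex of maximum degree, and switching so that the count $e$ is tied to $\Delta_G$, should give $n\le\frac{(1-\alpha^2)(1-2\alpha^2)}{2\alpha^4}$ whenever $\Delta\ge 9$ (respectively $8$) under the minimality normalization.

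Plugging in $\alpha=\frac1{1+2\sqrt3}$ and $\alpha=\sqrt5-2$ gives the thresholds $169$ and $135$. Checking that the degree thresholds $8$ and $7$ are exactly where the inequality changes sign is a numerical verification.

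\textbf{Main obstacle.} The hard part is Step 3: finding the right switching and the right test vector so that a single high-degree vertex, together with the edge-minimality of the switching class, forces $n$ below Balla's quantity. I would first try the test vector $x=\mathbf 1_V-c\,\mathbf 1_{\overline N(u)}$ in $x^TMx\ge0$ together with the minimality inequalities $|E(S,\bar S)\cap E|\le |E(S,\bar S)\setminus E|$ for $S=\{u\}$ and $S=N(u)$. Only if this fails would I fall back on the projected-code relative bound.
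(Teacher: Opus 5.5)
Your proposal has a genuine gap. It never produces an inequality that bounds $\deg_G(u)$, and Step~3, which you yourself flag as the hard part, only says the argument ``should give'' the result. The tool you name there cannot work. As displayed, $\sum_{i,j}(\langle\tilde w_i,\tilde w_j\rangle-\beta_+)(\langle\tilde w_i,\tilde w_j\rangle-\beta_-)$ is identically $(n-1)(1-\beta_+)(1-\beta_-)$, because every off-diagonal term vanishes. Content appears only after you insert $\sum_{i,j}\langle\tilde w_i,\tilde w_j\rangle^2\ge (n-1)^2/d'$, so whatever you obtain depends on the dimension $d'$. In its standard form the resulting relative bound is vacuous once $d'\ge(1-\alpha^2)/\alpha^2$, which is about $19$ (resp.\ $17$) here. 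The statement, however, is dimension-free: it must hold for every $d$, with a threshold on $n$ alone.

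Your other ingredients are too weak as well. Edge-minimality by itself only gives $\deg_G(u)\le(n-1)/2$. A test vector supported on $\overline N_G(u)$ only probes the principal submatrix there, so it cannot see edges leaving that set, contrary to what you suggest. Such local positive-semidefiniteness tests cannot bound $\deg_G(u)$: $K_{1,D}$ is switching-equivalent to the empty graph, so $\frac{1-\alpha}{2\alpha}I+\frac12J-A_{K_{1,D}}$ is positive semidefinite for every $D$. Finally, switching at $v$ to make it isolated before projecting discards exactly the degree structure you want to control, and it conflicts with your edge-minimal normalization.

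The missing idea is a dimension-free statement that every vector of a suitably switched code is uniformly aligned with one common direction. The paper obtains this in four steps.
\begin{enumerate}[label=(\arabic*)]
\item Switch signs so that the top eigenvector $x$ of $M_{\mathscr C}$ is nonnegative (Lemma~\ref{lemma:nonnegative}).
\item The trace inequality $n(\alpha^2n+1-\alpha^2)=\mathrm{tr}(M_{\mathscr C}^2)\le\lambda_1\,\mathrm{tr}(M_{\mathscr C})$ shows that $n>\frac{(1-\alpha^2)(1-2\alpha^2)}{2\alpha^4}$ forces $\lambda_1>\frac{1-\alpha^2}{2\alpha^2}$ (Lemma~\ref{lemma:uniform-upper}). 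This, not a degree-dependent sign change, is where $169$ and $135$ come from.
\item In that regime Balla's projection inequality (Theorem~\ref{thm:projection}) gives $x(u)\ge Q(\alpha,\lambda,n)>0$ for every $u$.
\item Evaluating $(J-2A_G)x=\frac{\lambda-1+\alpha}{\alpha}x$ at $u$ gives $\frac{\lambda-1+\alpha}{\alpha}Q\le\sqrt n-2\deg_G(u)Q$. Optimizing in $n$ and checking numerically in $\lambda$ then yields $\Delta_G\le 8$ (resp.\ $7$).
\end{enumerate}

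Your edge-minimal normalization is not itself the obstruction. Suppose $\Delta_G\le 8$ and $n>32$, and switch any $S$ with $k=\min(|S|,|V\setminus S|)\ge1$. The edge count then changes by at least $k(n-k)-16k>0$, so the paper's graph is the unique edge-minimal representative of its switching class. Proving the degree bound for that representative directly, however, would still require an alignment inequality as strong as Theorem~\ref{thm:projection}, and your proposal does not supply one.
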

To prove this theorem, we first show the following lemma.

\begin{lemma}\label{lemma:maximum-degree}
Let $\mathscr C$ be a spherical $\left\{-\alpha,\alpha\right\}$-code of size $n$ and $G$ be its associated graph. Assume that the eigenspace of $\lambda_1(M_{\mathscr C})$ contains a nonzero function with all coordinates nonnegative.
\begin{itemize}
    \item[(i)] If $\alpha=\frac{1}{1+2\sqrt{3}}$, $\lambda_1(M_{\mathscr C})>\frac{1-\alpha^2}{2\alpha^2}=6+2\sqrt{3}$, and $n>169$, then
\[
\Delta_G\leq 8.
\]
    \item[(ii)] If $\alpha=\sqrt{5}-2$, $\lambda_1(M_{\mathscr C})>\frac{1-\alpha^2}{2\alpha^2}=4+2\sqrt{5}$, and $n>135$, then
\[
\Delta_G\leq 7.
\]
\end{itemize}
\end{lemma}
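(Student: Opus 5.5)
The plan is to work with the Gram vectors directly and turn the nonnegative top eigenvector into a ``mean direction'' for the code. Write $\mathscr C=\{v_1,\dots,v_n\}$, let $\mu:=\lambda_1(M_{\mathscr C})$, and let $x\geq 0$ be a unit eigenvector with $M_{\mathscr C}x=\mu x$. Put $u:=\sum_i x_iv_i$. Then $|u|^2=x^TM_{\mathscr C}x=\mu$ and $\langle v_j,u\rangle=(M_{\mathscr C}x)_j=\mu x_j\geq 0$ for every $j$.

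By \eqref{eq:M_C}, the eigen-equation at a vertex $w$ reads
\[
(\mu-1+\alpha)\,x_w=\alpha s-2\alpha\sum_{j\sim w}x_j,\qquad s:=\sum_i x_i .
\]
This gives two consequences.
\begin{itemize}
\item[(a)] Since $x\geq 0$, we get $\sum_{j\sim w}x_j\leq s/2$ for every $w$.
\item[(b)] Summing the equation over $w$ and using $\mu>\frac{1-\alpha^2}{2\alpha^2}$ gives a quantitative lower bound on $s^2$. Equivalently, $x^TM_{\mathscr C}x=\mu$ together with $(1-\alpha)+\alpha s^2-2\alpha\,x^TA_Gx=\mu$ gives $s^2\geq (\mu-1+\alpha)/\alpha$.
\end{itemize}
So $x$ is ``spread out''. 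In particular $\max_i x_i$ is small compared with $s/n$ times a constant, and here I expect the hypotheses $n>169$ (resp.\ $n>135$) to enter, through Cauchy--Schwarz $s^2\leq n$ and the threshold $\frac{(1-\alpha^2)(1-2\alpha^2)}{2\alpha^4}$.

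Next, fix a vertex $w$ of degree $D$ and set $y:=\sum_{j\sim w}v_j$. I will use the fact that the $3\times 3$ Gram matrix of $u$, $v_w$, $y$ is positive semidefinite. Its entries are:
\begin{itemize}
\item $\langle u,u\rangle=\mu$, $\langle u,v_w\rangle=\mu x_w$ and $\langle u,y\rangle=\mu\sum_{j\sim w}x_j$;
\item $\langle v_w,v_w\rangle=1$ and $\langle v_w,y\rangle=-\alpha D$;
\item $\langle y,y\rangle=D+\alpha D(D-1)-4\alpha\, e(N_G(w))\leq D+\alpha D(D-1)$.
\end{itemize}
Projecting away the $u$-direction gives the residual vectors $v_w'$ and $y'$, which satisfy
\[
\langle v_w',y'\rangle=-\alpha D-\mu x_w\sum_{j\sim w}x_j,\qquad |v_w'|^2=1-\mu x_w^2,\qquad |y'|^2\leq D+\alpha D(D-1)-\mu\Bigl(\sum_{j\sim w}x_j\Bigr)^2 .
\]
Cauchy--Schwarz, $\langle v_w',y'\rangle^2\leq |v_w'|^2|y'|^2$, then gives an inequality in $D$, $\alpha$, $\mu$, $x_w$ and $\sigma:=\sum_{j\sim w}x_j$. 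Heuristically, $\alpha^2D^2\lesssim D(1+\alpha(D-1))$ is consistent, so the gain must come from the $-\mu\sigma^2$ and $\mu x_w\sigma$ terms. These terms are large because each neighbour $j$ carries mass $x_j$, which by (b) and the eigen-equation at $j$ is bounded below in terms of $s$.

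The main obstacle is exactly this last step: making the lower bound on $\sigma$ (roughly $\sigma\gtrsim D\cdot s\,\alpha/(\mu-1+\alpha+2\alpha\Delta)$ from the eigen-equation at neighbours) strong enough. Combined with $\mu>\frac{1-\alpha^2}{2\alpha^2}$ and $n$ above the threshold, it has to force $D\leq 8$ when $\alpha=\frac1{1+2\sqrt3}$ and $D\leq 7$ when $\alpha=\sqrt5-2$. I would first try to optimise the Cauchy--Schwarz inequality as a quadratic in $\sigma$, and check numerically that $D=9$ (resp.\ $D=8$) violates it for $\alpha=\frac1{1+2\sqrt3}$ (resp.\ $\alpha=\sqrt5-2$). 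If the margin is insufficient, I would refine by replacing $y$ with a weighted sum $\sum_{j\sim w}x_jv_j$, which aligns better with $u$, and redo the $3\times 3$ determinant computation.
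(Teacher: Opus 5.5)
Your proposal has a genuine gap, and it sits exactly where you place ``the main obstacle''. The argument needs a \emph{uniform positive lower bound on every coordinate of $x$}, and nothing in your plan produces one. Facts (a), (b) and the eigen-equation give only upper bounds or averaged information. At a neighbour $j$ of $w$ you have $(\mu-1+\alpha)x_j=\alpha s-2\alpha\sum_{i\sim j}x_i$. The only control on the subtracted term is (a), $\sum_{i\sim j}x_i\le s/2$, and this yields merely $x_j\ge 0$. Your estimate $\sigma\gtrsim Ds\alpha/(\mu-1+\alpha+2\alpha\Delta)$ would need something like $\sum_{i\sim j}x_i\le \Delta x_j$, which is unjustified. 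Likewise, ``$\max_i x_i$ is small compared with $s/n$'' does not follow from anything you wrote. Without such a lower bound, your Cauchy--Schwarz inequality bounds $D$ from below, not from above. For example, if $x_w=0$ (nothing in your argument excludes this), then $\sigma=s/2$ and the inequality reads $\mu s^2/4\le(1-\alpha)D(1+\alpha D)$, which every large $D$ satisfies. The hypothesis $\mu>\frac{1-\alpha^2}{2\alpha^2}$ enters your plan only through the weak bound (b), whereas it is the crux of the lemma. Replacing $y$ by $\sum_{j\sim w}x_jv_j$ does not help for the same reason.

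The missing ingredient is Balla's projection inequality, Theorem~\ref{thm:projection}. It gives $x(u)\ge Q(\alpha,\lambda,n)>0$ for every $u$, as in \eqref{eq:low}, precisely because $\lambda>\frac{1-\alpha^2}{2\alpha^2}$. It is a Gram/Bessel argument like yours, but one level up. Work in the space of symmetric $d\times d$ matrices with the Frobenius inner product, where the lifted vectors $v_iv_i^{T}$ have Gram matrix $(1-\alpha^2)I+\alpha^2J$. Project $Y=uv_w^{T}+v_wu^{T}$ onto their span and apply Bessel's inequality, using
\[
\langle Y,v_iv_i^{T}\rangle=2\mu x_i\langle v_w,v_i\rangle,\qquad \sum_i\langle Y,v_iv_i^{T}\rangle=2\mu^2x_w,\qquad \|Y\|^2=2\mu+2\mu^2x_w^2 .
\]
This gives exactly the stated inequality, and it is where both the threshold $\frac{1-\alpha^2}{2\alpha^2}$ and $n$ genuinely enter. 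Once $x_j\ge Q$ for all $j$, you no longer need the $3\times 3$ Gram matrix. The eigen-equation at $w$ alone gives $\bigl(\frac{\mu-1+\alpha}{\alpha}+2D\bigr)Q\le s\le\sqrt n$, which is \eqref{eq:max}. What remains is the paper's one-variable optimisation: maximise $f_{\alpha,\lambda}$ in $n$, with $t_0$ as in \eqref{eq:threshold_t0}, and split at $\lambda=29$ (resp.\ $\lambda=25$). This is where $n>169$ (resp.\ $n>135$) is used.
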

The proof of Lemma \ref{lemma:maximum-degree} is based on the following matrix projection inequality due to Balla {\cite{balla2021equiangular}}.

\begin{theorem}[{\cite[Theorem 2.1]{balla2021equiangular}}]\label{thm:projection}
Let $\alpha\in(0,1)$ and $\mathscr{C}$ be a spherical $\{-\alpha,\alpha\}$-code of size $n$ in $\mathbb{R}^d$. If $x$ is a unit eigenfunction of $M_{\mathscr C}$ with eigenvalue $\lambda$, then for every $u\in \mathscr C$ one has
\[
\lambda\left(\frac{\lambda^2}{\alpha^2n+1-\alpha^2}-\frac{1-\alpha^2}{2\alpha^2}\right)x(u)^2\ge \lambda-\frac{1-\alpha^2}{2\alpha^2}.
\]
\end{theorem}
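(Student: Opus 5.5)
The plan is to embed the code into the space of symmetric $d\times d$ matrices with the Frobenius inner product, and then apply Bessel's inequality for an orthogonal projection onto the span of the rank-one matrices $vv^{\top}$, $v\in\mathscr C$. We have $\langle vv^\top, ww^\top\rangle_F=\langle v,w\rangle^2$, which equals $1$ if $v=w$ and $\alpha^2$ otherwise. So the Gram matrix of the family $\{vv^\top\}_{v\in\mathscr C}$ is
\[
\Gamma=(1-\alpha^2)I+\alpha^2J .
\]
This matrix is invertible, with
\[
\Gamma^{-1}=\frac{1}{1-\alpha^2}\Bigl(I-\frac{\alpha^2}{K}J\Bigr),\qquad K:=\alpha^2n+1-\alpha^2 .
\]
For any symmetric $X$, put $b_v:=\langle X,vv^\top\rangle_F=v^\top Xv$. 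The squared norm of the projection of $X$ onto that span is $b^\top\Gamma^{-1}b$, and so
\[
\|X\|_F^2\ \ge\ b^\top\Gamma^{-1}b .
\]

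Next I choose the test matrix. Let $V$ be the $d\times n$ matrix whose columns are the code vectors, so that $M_{\mathscr C}=V^\top V$. Set $w:=Vx=\sum_v x(v)v$. Then
\[
|w|^2=x^\top M_{\mathscr C}x=\lambda,\qquad \langle v,w\rangle=(M_{\mathscr C}x)(v)=\lambda x(v)\ \text{ for every } v\in\mathscr C .
\]
A global choice $X=ww^\top$ only controls $\sum_v x(v)^4$, whereas the statement is pointwise in $u$. I therefore take the symmetrized matrix
\[
X=uw^\top+wu^\top .
\]

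The computations are as follows.
\begin{itemize}
\item The norm: $\|X\|_F^2=2|u|^2|w|^2+2\langle u,w\rangle^2=2\lambda+2\lambda^2x(u)^2$.
\item The coefficients: $b_v=2\langle v,u\rangle\langle v,w\rangle=2\lambda\,x(v)\,(M_{\mathscr C})_{uv}$.
\item Their squares: since $(M_{\mathscr C})_{uv}^2=\alpha^2$ for $v\ne u$ and $\sum_v x(v)^2=1$,
\[
\sum_v b_v^2=4\lambda^2\bigl(x(u)^2+\alpha^2(1-x(u)^2)\bigr).
\]
\item Their sum: $\sum_v b_v=2\lambda(M_{\mathscr C}x)(u)=2\lambda^2x(u)$.
\end{itemize}
Substituting these into $b^\top\Gamma^{-1}b=\frac{1}{1-\alpha^2}\bigl(\sum_v b_v^2-\frac{\alpha^2}{K}(\sum_v b_v)^2\bigr)$, the Bessel inequality becomes
\[
2\lambda\bigl(1+\lambda x(u)^2\bigr)\ \ge\ \frac{4\lambda^2}{1-\alpha^2}\Bigl(\alpha^2+(1-\alpha^2)x(u)^2-\frac{\alpha^2\lambda^2}{K}x(u)^2\Bigr).
\]

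Finally I rearrange. Assuming $\lambda>0$ (the case $\lambda=0$ is trivial; $M_{\mathscr C}$ is PSD), divide by $2\lambda$ and multiply by $1-\alpha^2$. Collecting the $x(u)^2$ terms gives
\[
\Bigl(\frac{2\alpha^2\lambda^3}{K}-\lambda(1-\alpha^2)\Bigr)x(u)^2\ \ge\ 2\alpha^2\lambda-(1-\alpha^2).
\]
Dividing by $2\alpha^2$ yields exactly the claimed inequality. The only genuinely creative step is finding the right test matrix $X=uw^\top+wu^\top$; after that, the main care needed is in the algebra: the inverse of $\Gamma$ and keeping the diagonal term $v=u$ separate in $\sum_v b_v^2$.
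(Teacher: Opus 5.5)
Your proof is correct and complete. The inverse of $\Gamma=(1-\alpha^2)I+\alpha^2J$, the values $\|X\|_F^2=2\lambda+2\lambda^2x(u)^2$, $\sum_v b_v^2$ and $\sum_v b_v$, and the final rearrangement all check out, as does the trivial case $\lambda=0$. The symmetrized test matrix $uw^{\top}+wu^{\top}$ is what gives the constant $\frac{1-\alpha^2}{2\alpha^2}$; the choice $uw^{\top}$ would only give $\frac{1-\alpha^2}{\alpha^2}$.

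The paper does not prove this statement itself and simply cites it from Balla. There it is obtained by the same kind of orthogonal projection, in the Frobenius inner product, onto $\operatorname{span}\{vv^{\top}:v\in\mathscr C\}$, so your route is essentially the source's.
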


\begin{remark}\label{rem:positive}
As a consequence of Theorem \ref{thm:projection}, we have 
\[
\frac{\lambda^2}{\alpha^2n+1-\alpha^2}-\frac{1-\alpha^2}{2\alpha^2}>0
\]
whenever $\lambda>\frac{1-\alpha^2}{2\alpha^2}$.
\end{remark}

\begin{proof}[Proof of Lemma \ref{lemma:maximum-degree}]
Let $\lambda=\lambda_1(M_{\mathscr C})$, and  $x$ be a unit eigenfunction corresponding to $\lambda$ with nonnegative coordinates. If $\lambda>\frac{1-\alpha^2}{2\alpha^2}$,  by Theorem~\ref{thm:projection} and Remark~\ref{rem:positive}, for every $u\in \mathscr C$, we have
\begin{equation}\label{eq:low}
x(u)\ge \sqrt{\frac{1-\frac{1-\alpha^2}{2\alpha^2\lambda}}{\frac{\lambda^2}{\alpha^2n+1-\alpha^2}-\frac{1-\alpha^2}{2\alpha^2}}}=:Q(\alpha,\lambda,n),
\end{equation}
for every $u\in \mathscr C$. Using \eqref{eq:M_C} and \eqref{eq:low}, for every vertex $u$, we obtain
\begin{equation*}\label{eq:upper}
\begin{aligned}
\left(\frac{\lambda}{\alpha}-\frac{1-\alpha}{\alpha}\right)Q(\alpha,\lambda,n)
&\le \left(\left(\frac{M_{\mathscr C}}{\alpha}-\frac{1-\alpha}{\alpha}I\right)x\right)(u)\\
&=((J-2A_G)x)(u)\\
&=\langle \mathbf 1,x\rangle-2\sum_{v\in N_G(u)}x(v)\\
&\le \sqrt n-2\deg_G(u)Q(\alpha,\lambda,n),
\end{aligned}
\end{equation*}
This implies
\begin{equation}\label{eq:max}
\Delta_G\le \frac{\sqrt n}{2Q(\alpha,\lambda,n)}-\frac{\lambda}{2\alpha}+\frac{1-\alpha}{2\alpha}.
\end{equation}
Let us define the function $f_{\alpha,\lambda}: [0,\infty)\to \mathbb{R}$ by
\[f_{\alpha,\lambda}(t):=\frac{t\lambda^2}{\alpha^2t+1-\alpha^2}-\frac{1-\alpha^2}{2\alpha^2}t.\]
Then we derive that
\begin{equation}\label{eq:max_f}
   \Delta_G\le \frac{1}{2}\sqrt{\frac{f_{\alpha,\lambda}(n)}{1-\frac{1-\alpha^2}{2\alpha^2\lambda}}}-\frac{\lambda}{2\alpha}+\frac{1-\alpha}{2\alpha}.
\end{equation}
Observe that the function $f'(t)=0$ if and only if \begin{equation}\label{eq:threshold_t0}
    t=t_0:=\frac{\sqrt{2}\lambda}{\alpha}-\frac{1-\alpha^2}{\alpha^2},
\end{equation}
and, whenever $t_0>0$, $f$ increase on $[0,t_0]$ and decreases on $[t_0,\infty)$. Since $\lambda>\frac{1-\alpha^2}{2\alpha^2}$, we have $t_0>0$ once $\alpha\leq 1/\sqrt{2}$.

\medskip
\noindent\textit{Proof of $(i)$.} By $\alpha=\frac{1}{1+2\sqrt{3}}$, we have $\frac{1-\alpha}{2\alpha}=\sqrt{3}$. In order to show $\Delta_G\leq 8$ from \eqref{eq:max_f}, it suffices to prove 
\begin{equation}\label{eq:7.25}
\frac{1}{2}\sqrt{\frac{f_{\alpha,\lambda}(n)}{1-\frac{1-\alpha^2}{2\alpha^2\lambda}}}-\frac{\lambda}{2\alpha}\leq 7.25.
\end{equation}
Observe that $t_0>0$. We divide the proof of \eqref{eq:7.25} into two cases.

\textbf{Case 1:} $\lambda<29$.
In this case, we have
\[
t_0=\sqrt{2}(1-2\sqrt{3})\,\lambda-4(3+\sqrt{3})<170\le n.
\]
Therefore,
\[
f(n)\le f(170)=\frac{170(13+4\sqrt{3})\lambda^2}{182+4\sqrt{3}}-170(6+2\sqrt{3}).
\]
To prove \eqref{eq:7.25}, it is therefore enough to prove
\begin{equation}\label{eq:12leq40}
\frac12\sqrt{\frac{\frac{170(13+4\sqrt{3})\lambda^2}{182+4\sqrt{3}}-170(6+2\sqrt{3})}{1-\frac{6+2\sqrt{3}}{\lambda}}}-\frac{(1+2\sqrt{3})\lambda}{2}\le 7.25.
\end{equation}
Since $\lambda>6+2\sqrt{3}$, the inequality \eqref{eq:12leq40} is equivalent to
\begin{equation}\label{eq:12l40}
\frac{(-12-4\sqrt{3})(13+4\sqrt{3})}{182+4\sqrt{3}}\lambda^3+(73-8\sqrt{3})\lambda^2+(66\sqrt{3}-708.25)\lambda+\frac{841(3+\sqrt{3})}{2}\le 0,
\end{equation}
which indeed holds for all $\lambda>6+2\sqrt{3}$.

\textbf{Case 2:} $\lambda\ge 29$.
In this range, 
\[
f(n)\le f(t_0)
=(13+4\sqrt{3})\lambda^2-(28\sqrt{6}+36\sqrt{2})\lambda+96+48\sqrt{3}.
\]
Hence, to prove \ref{eq:7.25}, it is enough to verify
\[
\frac12\sqrt{\frac{(13+4\sqrt{3})\lambda^2-(28\sqrt{6}+36\sqrt{2})\lambda+96+48\sqrt{3}}{1-\frac{6+2\sqrt{3}}{\lambda}}}-\frac{(1+2\sqrt{3})\lambda}{2}\le 7.25.
\]
This is equivalent to
\begin{equation}\label{eq:12g40}
(-73+8\sqrt{3}+28\sqrt{6}+36\sqrt{2})\lambda^2-\left(\frac{1631}{4}+454\sqrt{3}\right)\lambda-\frac{841(3+\sqrt{3})}{2}\ge 0.
\end{equation}
This inequality holds for every $\lambda\ge 29$.
This finishes the proof of $(i)$.

\medskip
\noindent\textit{Proof of $(ii)$.} Since $\alpha=\sqrt{5}-2$, we have $\frac{1-\alpha}{2\alpha}=\frac{1+\sqrt{5}}{2}$. In order to show $\Delta_G\leq 7$ from \eqref{eq:max_f}, it suffices to prove 
\begin{equation}\label{eq:golden}
\frac{1}{2}\sqrt{\frac{f_{\alpha,\lambda}(n)}{1-\frac{1-\alpha^2}{2\alpha^2\lambda}}}-\frac{\lambda}{2\alpha}<8-\frac{1+\sqrt{5}}{2}. 
\end{equation}
Notice that in this case we still have $t_0>0$.
The proof of \eqref{eq:golden} follows analogously as the proof of $(i)$ by dividing into two cases $\lambda<25$ and $\lambda\geq 25$.
\end{proof}
The next two lemmas are proved in \cite[Lemma 3.1 and Lemma 3.2]{balla2021equiangular}. They verify the hypotheses required to apply Lemma \ref{lemma:maximum-degree}. For the reader's convenience, we recall their proofs here.
\begin{lemma}\label{lemma:nonnegative}
Let $\{l_1,l_2,\ldots,l_n\}$ be $n$ equiangular lines in $\mathbb{R}^d$ with common angle $\arccos(\alpha)$. Then there is a spherical $\{-\alpha,\alpha\}$-code $\mathscr C$ of size $n$ in $\mathbb{R}^d$ such that there exists an eigenfunction of $M_{\mathscr C}$ corresponding to $\lambda_1(M_{\mathscr{C}})$ with nonnegative coordinates.
\end{lemma}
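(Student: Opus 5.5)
We will exploit the freedom to flip the sign of each unit vector. Fix unit vectors $u_1,\dots,u_n$ spanning $l_1,\dots,l_n$. Any choice of signs $\epsilon\in\{\pm1\}^n$ yields the spherical $\{-\alpha,\alpha\}$-code $\mathscr C_\epsilon=\{\epsilon_1u_1,\dots,\epsilon_nu_n\}\subseteq\mathbb R^d$. Its Gram matrix is
\[
M_{\mathscr C_\epsilon}=D_\epsilon M D_\epsilon,
\]
where $M$ is the Gram matrix of the $u_i$ and $D_\epsilon=\mathrm{diag}(\epsilon)$. Since $D_\epsilon$ is orthogonal and an involution, all the matrices $M_{\mathscr C_\epsilon}$ share the same spectrum. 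In particular, $\lambda_1(M_{\mathscr C_\epsilon})=\lambda_1(M)$ for every $\epsilon$.

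Now take any eigenvector $y$ of $M$ for the eigenvalue $\lambda_1(M)$. Choose $\epsilon_i=1$ if $y_i\ge 0$ and $\epsilon_i=-1$ otherwise, and set $x:=D_\epsilon y$. Then $x$ has all coordinates nonnegative; indeed $x_i=|y_i|$. Moreover,
\[
M_{\mathscr C_\epsilon}x=D_\epsilon M D_\epsilon D_\epsilon y=D_\epsilon M y=\lambda_1(M)\,D_\epsilon y=\lambda_1(M)\,x .
\]
Hence $x$ is a nonzero eigenfunction of $M_{\mathscr C_\epsilon}$ for $\lambda_1(M_{\mathscr C_\epsilon})$ with nonnegative coordinates. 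The code $\mathscr C:=\mathscr C_\epsilon$ therefore has the required property.

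There is essentially no obstacle in this argument. The only points to check are that sign flips preserve the code property (the inner products change only by sign) and that they keep the vectors in $\mathbb R^d$. One should also note that $D_\epsilon^{-1}=D_\epsilon$, which is what makes the eigenvector computation work. No Perron–Frobenius argument is needed, since the matrix $M$ need not be nonnegative.
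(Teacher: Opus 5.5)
Your proof is correct and follows the same route as the paper: flip the vectors at which a top eigenvector of the Gram matrix is negative, so that its coordinatewise absolute value becomes a top eigenvector of the new Gram matrix. Your explicit conjugation $M_{\mathscr C_\epsilon}=D_\epsilon M D_\epsilon$ spells out the ``direct computation'' the paper leaves implicit, including why $\lambda_1$ is unchanged by the sign flips.
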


\begin{proof}
Choose arbitrary unit vectors $\mathscr C_0=\{u_1,u_2,\ldots,u_n\}$ such that $u_i$ spanning $l_i$, and let $x$ be an eigenfunction of $M_{\mathscr C_0}$ corresponding to $\lambda_1(M_{\mathscr C_0})$. Let
\[
I:=\{i:x(u_i)<0\}.
\]
Define a new spherical $\{-\alpha,\alpha\}$-code $\mathscr{C}:=\{v_1,v_2,\ldots,v_n\}$ by
\[
v_i=
\begin{cases}
-u_i,& i\in I,\\
\phantom{-}u_i,& i\notin I.
\end{cases}
\]
Define $y=(|x_1|,|x_2|,\ldots,|x_n|)^T$. By direct computation, we have $y$ is an eigenfunction of $M_{\mathscr C}$ corresponding to $\lambda_1(M_{\mathscr C})$, and every coordinate of $y$ is nonnegative.
\end{proof}

\begin{lemma}\label{lemma:uniform-upper}
Let $\alpha\in(0,1)$ and $\mathscr{C}$ be a spherical $\{-\alpha,\alpha\}$-code of size $n$. If $\lambda_1(M_{\mathscr C})\le \frac{1-\alpha^2}{2\alpha^2},$
then
\[
n\le \frac{(1-\alpha^2)(1-2\alpha^2)}{2\alpha^4}.
\]
\end{lemma}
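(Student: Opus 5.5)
We will prove Lemma~\ref{lemma:uniform-upper} by comparing the trace of $M_{\mathscr C}$ with the trace of its square, using that $M_{\mathscr C}$ is a positive semidefinite matrix whose eigenvalues are all at most $\mu:=\frac{1-\alpha^2}{2\alpha^2}$.

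\textbf{Step 1: the two trace identities.} Let $M=M_{\mathscr C}$ and let $\mu_1,\dots,\mu_n\ge 0$ be its eigenvalues. Since $M$ is a Gram matrix it is positive semidefinite. Each diagonal entry equals $1$ and each off-diagonal entry equals $\pm\alpha$. Hence
\[
\sum_i\mu_i=\operatorname{tr}M=n,
\qquad
\sum_i\mu_i^2=\operatorname{tr}M^2=\sum_{i,j}M_{ij}^2=n+n(n-1)\alpha^2 .
\]

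\textbf{Step 2: the eigenvalue inequality.} By hypothesis $0\le\mu_i\le\lambda_1(M)\le\mu$ for every $i$, so $\mu_i^2\le\mu\,\mu_i$. Summing over $i$ gives
\[
n+n(n-1)\alpha^2\le \mu n .
\]

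\textbf{Step 3: solve for $n$.} Dividing by $n$ gives $1+(n-1)\alpha^2\le\frac{1-\alpha^2}{2\alpha^2}$. Rearranging,
\[
(n-1)\alpha^2\le\frac{1-3\alpha^2}{2\alpha^2},
\qquad\text{so}\qquad
n\le 1+\frac{1-3\alpha^2}{2\alpha^4}=\frac{1-\alpha^2}{2\alpha^4}.
\]

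\textbf{The obstacle: this bound is too weak.} The target is $\frac{(1-\alpha^2)(1-2\alpha^2)}{2\alpha^4}$, which is smaller than what Step 3 gives by $\frac{1-\alpha^2}{\alpha^2}$. So the crude estimate $\mu_i^2\le\mu\mu_i$ is not enough, and this is the main step to repair.

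\textbf{Step 4: use the rank.} We will keep track of the rank $r$ of $M$, which is at most $d$ and is the number of nonzero $\mu_i$.

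\emph{Upper bound on $n$ in terms of $r$.} Cauchy--Schwarz gives $n^2=(\sum\mu_i)^2\le r\sum\mu_i^2=r\,(n+n(n-1)\alpha^2)$. This is the relative bound
\[
n\le\frac{r(1-\alpha^2)}{1-r\alpha^2}.
\]

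\emph{Lower bound on $r$ in terms of $n$.} Each nonzero eigenvalue is at most $\mu$, so $n=\sum\mu_i\le r\mu$, that is, $r\ge n/\mu$.

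\emph{Combining.} Substituting $r\ge n/\mu$ into the relative bound points the wrong way, so we will use the two constraints jointly. For fixed trace $n$, with $r$ nonzero eigenvalues each in $(0,\mu]$, we want to bound $\sum\mu_i^2$ from below in a way that sharpens Step 2. A Cauchy--Schwarz bound on the deviations $\mu-\mu_i$ does not obviously give anything new. What we will actually try first is the following variance argument on the nonzero eigenvalues:
\[
\sum_{\mu_i>0}(\mu-\mu_i)\Bigl(\mu_i-\tfrac{n}{r}\Bigr)\ \ge\ -\ldots
\]
Equivalently, we expand $\sum_{\mu_i>0}(\mu-\mu_i)\mu_i\ge0$ together with $r\ge n/\mu$ and the relative bound. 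From these we solve for $n$, aiming at $n\le\frac{(1-\alpha^2)(1-2\alpha^2)}{2\alpha^4}$. This is the Neumaier/Balla-type computation.

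Matching the exact constant $(1-2\alpha^2)$ is the part we are least sure of. If the variance argument falls short, we will instead redo Steps 1–3 for the shifted matrix $M-\alpha J$ or $M+\alpha J$ after choosing signs so that the eigenvector is nonnegative. We will then apply the same trace argument there, which changes the off-diagonal structure by the rank-one term.
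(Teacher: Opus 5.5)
Your Steps 1--3 are the paper's proof: compare $\operatorname{tr}M_{\mathscr C}=n$ with $\operatorname{tr}M_{\mathscr C}^2=n(\alpha^2n+1-\alpha^2)$ using $\sum_i\mu_i^2\le\lambda_1(M_{\mathscr C})\sum_i\mu_i$. They already prove the lemma. The ``obstacle'' comes from an arithmetic slip in the last line of Step~3. In fact
\[
1+\frac{1-3\alpha^2}{2\alpha^4}=\frac{2\alpha^4-3\alpha^2+1}{2\alpha^4}=\frac{(1-\alpha^2)(1-2\alpha^2)}{2\alpha^4},
\]
which is exactly the target bound. The value $\frac{1-\alpha^2}{2\alpha^4}$ you wrote is what you get by writing the $1$ as $\frac{2\alpha^2}{2\alpha^4}$ instead of $\frac{2\alpha^4}{2\alpha^4}$. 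That slip also produced the nonexistent discrepancy $\frac{1-\alpha^2}{\alpha^2}$.

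As submitted, the proposal does not end in a proof, because you abandon a correct argument. What replaces it is unfinished: the displayed inequality in Step~4 ends in a placeholder, and the fallback to $M\pm\alpha J$ is never specified. Fix the simplification, then delete the ``obstacle'' paragraph and all of Step~4. The rank $r$, the Cauchy--Schwarz relative bound and the sign choices are not needed here.
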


\begin{proof}
Since $M_{\mathscr C}$ is positive semidefinite,
\[
\mathrm{tr}(M_{\mathscr C}^2)=\sum_{i=1}^n\lambda^2_i(M_{\mathscr C})\le \lambda_1(M_{\mathscr C})\,\mathrm{tr}(M_{\mathscr C}).
\]
On the other hand,
\[
\mathrm{tr}(M_{\mathscr C})=n,
\qquad
\mathrm{tr}(M_{\mathscr C}^2)=n(\alpha^2n+1-\alpha^2).
\]
Therefore
\[
n(\alpha^2n+1-\alpha^2)\le \frac{1-\alpha^2}{2\alpha^2}\,n,
\]
which is equivalent to the desired estimate.
\end{proof}
To conclude this section, we prove Theorem \ref{thm:maximum-degree}. 
\begin{proof}[Proof of Theorem \ref{thm:maximum-degree}]
   By Lemma \ref{lemma:nonnegative}, we have there exists a spherical $\{-\alpha,\alpha\}$-code $\mathscr{C}$ of size $n$ such that  the eigenspace of $\lambda_1(M_{\mathscr C})$ contains a nonzero function with all coordinates nonnegative. When $n>169$ and $\alpha=\frac{1}{1+2\sqrt{3}}$ (resp., $n>135$ and $\alpha=\sqrt{5}-2$), we have $\lambda_1(M_{\mathscr{C}})>6+2\sqrt{3}$ (resp., $\lambda_1(M_{\mathscr{C}})>4+2\sqrt{5}$) by Lemma \ref{lemma:uniform-upper}. 

   Let $G$ be the graph associated to $\mathscr{C}$. By Lemma \ref{lemma:maximum-degree}, we have $\Delta_G\leq 8$ (resp., $\Delta_G\leq 7$) if $\alpha=\frac{1}{1+2\sqrt{3}}$(resp., $\alpha=\sqrt{5}-2$). 
\end{proof}

\subsection{Proof}
In this subsection, we prove Theorem \ref{thm:main}. We first give the following lemma.
\begin{lemma}\label{lemma:seondeigen}
    Let $G=(V,E)$ be a graph such that $\lambda_1(A_G)>\lambda_2(A_G)$. Denote by $\{G_i=(V_i,E_i)\}_{i=1}^t$ the connected components of $G$. If $\lambda_2(A_G)I-A_G+\frac{1}{2}J$ is positive semidefinite and $\lambda_1(A_{G_1})=\lambda_1(A_G)$, then $m_{\lambda_2(A_G)}(A_G)=m_{\lambda_2(A_G)}(A_{G_1})$.
\end{lemma}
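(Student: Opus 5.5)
Write $\mu:=\lambda_2(A_G)$. Since $A_G$ is the direct sum of the $A_{G_i}$, we have
\[
m_\mu(A_G)=\sum_{i=1}^t m_\mu(A_{G_i}),
\]
so the task is to show $m_\mu(A_{G_i})=0$ for every $i\ge 2$.

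First I will record some spectral facts. Because $\lambda_1(A_G)>\mu$, the eigenvalue $\lambda_1(A_G)$ is simple in $A_G$. It is attained by $G_1$, so $\lambda_1(A_{G_i})\le \mu$ for every $i\ge 2$; otherwise $\lambda_1(A_G)$ would have multiplicity at least two. It is also strictly larger than any eigenvalue of $A_{G_i}$ for $i\ge2$. Hence, for $i\ge 2$, $\mu$ can only occur in $A_{G_i}$ as its Perron eigenvalue, i.e. when $\lambda_1(A_{G_i})=\mu$. Such an eigenvalue is simple, and its Perron vector $y_i$ is positive on $V_i$.

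Now suppose, for contradiction, that some $i\ge2$ has $\lambda_1(A_{G_i})=\mu$, and let $y$ be the extension of $y_i$ by zero, so that $A_Gy=\mu y$. Then
\[
y^T\Bigl(\mu I-A_G+\tfrac12 J\Bigr)y=\tfrac12\langle \mathbf 1,y\rangle^2>0.
\]
This by itself gives no contradiction, so I will use positive semidefiniteness on a cleverer vector. Let $x_1$ be the Perron vector of $G_1$, extended by zero; it is nonnegative and nonzero, and satisfies $A_Gx_1=\lambda_1x_1$ with $\lambda_1:=\lambda_1(A_G)$. Consider $z=ax_1-by$ with $a,b>0$. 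Because $x_1$ and $y$ have disjoint supports,
\[
z^T\Bigl(\mu I-A_G+\tfrac12J\Bigr)z
=a^2(\mu-\lambda_1)\|x_1\|^2+\tfrac12\bigl(a\langle\mathbf 1,x_1\rangle-b\langle\mathbf 1,y\rangle\bigr)^2 .
\]
The contribution of $y$ itself vanishes since $(\mu I-A_G)y=0$. Choose $a,b>0$ so that the second bracket is zero, which is possible because both sums are positive. The form then equals $a^2(\mu-\lambda_1)\|x_1\|^2<0$, contradicting positive semidefiniteness. Therefore no $G_i$ with $i\ge2$ has $\mu$ as an eigenvalue, and $m_\mu(A_G)=m_\mu(A_{G_1})$.

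The main thing to get right is the choice of test vector: the $J$ term has to be cancelled using two nonnegative vectors on disjoint components with opposite signs. The one subtle point is confirming that $\mu$ cannot occur in $A_{G_i}$ ($i\ge 2$) as a non-top eigenvalue. This follows because $\mu$ is the second largest eigenvalue of the whole $A_G$, which already bounds all eigenvalues of the $G_i$ with $i\ge 2$.
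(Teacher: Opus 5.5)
Your proof is correct and follows essentially the paper's argument. In both, the Perron vectors of $G_1$ and $G_i$ are combined into a zero-sum test vector $f-cg$, which kills the $\tfrac12 J$ term, and positive semidefiniteness of $\mu I-A_G+\tfrac12 J$ then yields a contradiction.

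The paper keeps $\lambda_1(A_{G_i})$ arbitrary and reads off $\lambda_1(A_{G_i})<\mu$ directly from $(\mu-\lambda_1(A_G))\|f\|^2\ge c^2(\lambda_1(A_{G_i})-\mu)\|g\|^2$. So your preliminary reduction to the case $\lambda_1(A_{G_i})=\mu$ is not needed. Its stated justification should also read $\lambda_2(A_G)\ge\lambda_1(A_{G_i})$, not ``$\lambda_1(A_G)$ would have multiplicity two''.
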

\begin{proof}
Set $\lambda:=\lambda_2(A_G)$. If $t=1$, then this lemma follows directly. Thus, for the remainder of the proof, we assume that $t\geq2$.

It is enough to prove this lemma by proving $\lambda_1(A_{G_i})< \lambda $ for any $i\geq 2$.

Let $f$ (resp., $g$) be an eigenfunction of $A_{G}$ corresponding to $\lambda_1(A_{G_1})$ (resp., $\lambda_1(A_{G_i})$ for some $2\leq i\leq t$). By the Perron--Frobenius theorem, we assume both $f$ and $g$ have no negative coordinates, and $f$ is positive on $V_1$ and $g$ is positive on $V_i$. Take a constant $c$ such that the function $h:=f-cg$ satisfies $h^T\mathbf{1}=0$. Notice that such a $c$ is non-zero. Since $\lambda I-A_G+\frac{1}{2}J$ is positive semidefinite, we have $h^T(\lambda I-A_G+\frac{1}{2}J)h\geq0$, which is equivalent to $$(\lambda -\lambda_1(A_G))f^Tf\geq c^2(\lambda_1(A_{G_i})-\lambda )g^Tg.$$
 This implies $\lambda_1(A_{G_i})< \lambda$ for any $i\geq 2$ and proves the lemma.
\end{proof}
We next prove Theorem \ref{thm:main}.
\begin{proof}[Proof of Theorem \ref{thm:main}]
$(i)$ We first assume $d\geq 121$.
    
Let $\mathcal{L}=\{l_1,l_2,\ldots,l_n\}$ be $n$ equiangular lines in $\mathbb{R}^d$ with $n=N_{\frac{1}{1+2\sqrt{3}}}(d)$. If $n\leq169$, then Theorem \ref{thm:main} $(i)$ follows directly. So we assume $n>169$. By Theorem \ref{thm:maximum-degree} $(i)$, there exists a spherical $\{-\alpha,\alpha\}$-code of size $n$ in $\mathbb{R}^d$ such that its associated graph $G=(V,E)$ satisfying $\Delta_G\leq8$. By Lemma \ref{lemma:equivalent}, we obtain  $\sqrt{3}I+\frac12J-A_G$ is positive semidefinite and has rank at most $d$. This implies that if $\sqrt{3}$ is an eigenvalue of $A_G$, then it must be either the largest or the second largest eigenvalue of $A_G$. 

If $\sqrt{3}$ is not an eigenvalue of $A_G$, then we derive  
    $$d\geq \mathrm{rank}\left(\sqrt{3} I-A+\frac{1}{2}J\right)\geq \mathrm{rank}(\sqrt{3} I-A)-1\geq  n-1.$$
Since $d\geq 121$, we have $n\leq d+1<\left\lfloor \frac{4(d-1)}{3}\right\rfloor$. 

If $\sqrt{3}$ is the largest eigenvalue of $A_G$. We denote by $\{G_i=(V_i,E_i)\}_{i=1}^t$ the connected components of $G$. We assume $\lambda_1(A_{G_i})=\sqrt{3}$ for $1\leq i\leq l$ and  $\lambda_1(A_{G_i})<\sqrt{3}$ for $l+1\leq i\leq t$. Since no graph of order less than $4$ has spectral radius $\sqrt{3}$, we have $n\geq \sum_{i=1}^l|V_i|\geq 4l,$
and hence
\begin{equation}\label{eq:ngeq3l}
    \mathrm{dim\,ker}(\sqrt{3} I-A)=l\leq \frac{n}{4}.
\end{equation}
 Because both $\sqrt{3}I-A_G$ and $\frac{1}{2}J$ are positive semidefinite, it follows that $$\mathrm{ker}\left(\sqrt{3}I-A+\frac{1}{2}J\right)=\mathrm{ker}(\sqrt{3}I-A)\cap\mathrm{ker}(J).$$
 By the Perron--Frobenius theorem, there exists a nonzero vector $x$ in $\mathrm{ker}(\sqrt{3}I-A)$ without negative coordinates. Because $x\notin \mathrm{ker}(J)$, we have  $$\mathrm{dim\,ker}\left(\sqrt{3}I-A+\frac{1}{2}J\right)\leq \mathrm{dim\,ker}(\sqrt{3} I-A)-1.$$
 By the rank-nullity
theorem, we deduce 
$$\mathrm{rank}(\sqrt{3} I-A)\leq\mathrm{rank}\left(\sqrt{3} I-A+\frac{1}{2}J\right)-1\leq d-1.$$
 Applying the inequality (\ref{eq:ngeq3l}), this leads to  $$n=\mathrm{rank}(\sqrt{3}I-A)+\mathrm{dim\,ker}((\sqrt{3} I-A)\leq d-1+\frac{n}{4}.$$
 That is, we have $n\leq \frac{4(d-1)}{3}$. 

If $\sqrt{3}$ is the second largest eigenvalue of $A_G$, we derive \begin{equation*}
\begin{aligned}
    n&=\mathrm{rank}(\sqrt{3}I-A_G)+\mathrm{dim\,ker}(\sqrt{3}I-A_G)\\
    &\leq\mathrm{rank}\left(\sqrt{3}I-A_G+\frac{J}{2}\right)+1+\mathrm{dim\,ker}(\sqrt{3}I-A_{G})\\
     &\leq d+1+m_{\sqrt{3}}(A_G).
\end{aligned}
\end{equation*}
Denote by $\{G_i=(V_i,E_i)\}_{i=1}^t$ the connected components of $G$. There exists at least one connected component, say $G_1$, with $\lambda_1(A_{G_1})=\lambda_1(A_G)>\sqrt{3}$. By Lemma \ref{lemma:seondeigen}, we have $m_{\sqrt{3}}(A_G)=m_{\sqrt{3}}(A_{G_1})$. Then, we compute
\begin{equation*}
    n\leq d+1+m_{\sqrt{3}}(A_{G})= d+1+m_{\sqrt{3}}(A_{G_1})\leq d+1+37\leq \left\lfloor\frac{4d-4}{3}\right\rfloor,
\end{equation*}
where we have applied Lemma \ref{lemma:multi} $(i)$. This completes the proof of $(i)$ under the condition $d\geq 121$. 

When $d< 121$ ,we have \[N_{\frac{1}{1+2\sqrt{3}}}(d)\leq N_{\frac{1}{1+2\sqrt{3}}}(121)\leq \max\left\{169, \left\lfloor\frac{4d-4}{3}\right\rfloor\right\}=169.\]
This completes the proof of Theorem \ref{thm:main} $(i)$.

 $(ii)$ We first assume $d\geq 88$.
 
 If $n\leq135$, Theorem \ref{thm:main} $(ii)$ follows directly. We assume $n>135$. By Theorem \ref{thm:maximum-degree} and Lemma \ref{lemma:equivalent}, we deduce that there exists a graph $G=(V,E)$ with $\Delta_G\leq 7$, such that $\frac{\sqrt{5}+1}{2}I+\frac12J-A_G$ is positive semidefinite and has rank at most $d$. Arguing as in the proof of Theorem~\ref{thm:main} \((i)\), we obtain $N_{\sqrt{5}-2}(d)\leq \left\lfloor  \frac{4d-4}{3}\right \rfloor$ if $\sqrt{5}-2$ is not the second largest eigenvalue of $A_G$.
 
Assume $\sqrt{5}-2$ is the second largest eigenvalue of $A_G$. Denote by $\{G_i=(V_i,E_i)\}_{i=1}^t$ the connected components of $G$. Then there exists at least one connected component, say $G_1$, with $\lambda_1(A_{G_1})=\lambda_1(A_G)>\frac{1+\sqrt{5}}{2}$. By Lemma \ref{lemma:seondeigen}, we obtain $$N_{\sqrt{5}-2}(d)\leq d+1+m_{\frac{1+\sqrt{5}}{2}}(A_{G_1}).$$
Using Lemma \ref{lemma:multi} $(ii)$, we obtain $$N_{\sqrt{5}-2}(d)\leq d+27\leq \left\lfloor  \frac{4d-4}{3}\right \rfloor.$$

 When $d< 88$, we have $N_{\sqrt{5}-2}(d)\leq N_{\sqrt{5}-2}(88)\leq \max\{135, \left\lfloor\frac{4d-4}{3}\right\rfloor\}=135$.
 This conclude the proof of Theorem \ref{thm:main} $(ii)$.
\end{proof}

\section{Proof of Theorem \ref{thm:counterexample}}
To prove Theorem \ref{thm:counterexample}, we need the following preparation. 
\begin{lemma}\label{lemma:equiangular}
    Let $G=(V,E)$ be a connected $n$-vertex $k$-regular graph with the second largest eigenvalue $\lambda$. Suppose that
    \[
        \lambda-k+\frac{n}{2}\geq 0.
    \]
   Denote by $m:=m_{\lambda}(A_G)$.  Then there exists  $n$ equiangular lines in
    $\mathbb{R}^{n-m}$ with common angle
    \[
        \arccos\left(\frac{1}{1+2\lambda}\right).
    \]
\end{lemma}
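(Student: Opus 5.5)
We plan to use Lemma~\ref{lemma:equivalent}$(iii)$ with $\alpha=\frac{1}{1+2\lambda}$, so that $\frac{1-\alpha}{2\alpha}=\lambda$. It then suffices to check that the matrix
\[
M:=\lambda I+\tfrac12 J-A_G
\]
is positive semidefinite and that its rank is at most $n-m$.

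Since $G$ is $k$-regular and connected, $A_G$ is symmetric and has the all-one vector $\mathbf 1$ as an eigenvector with eigenvalue $k=\lambda_1(A_G)$. The eigenvalue $k$ is simple by Perron--Frobenius. We choose an orthonormal eigenbasis of $A_G$ consisting of $\mathbf 1/\sqrt n$ together with vectors orthogonal to $\mathbf 1$. Because $J$ acts as $n$ on $\mathbf 1$ and as $0$ on $\mathbf 1^\perp$, all three matrices $A_G$, $J$, $I$ are simultaneously diagonalized in this basis. Consequently the eigenvalues of $M$ are the following.
\begin{itemize}
\item On $\mathbf 1$, the eigenvalue is $\lambda+\frac n2-k$, which is $\geq 0$ by hypothesis.
\item On an eigenvector $v\perp\mathbf 1$ of $A_G$ with eigenvalue $\mu$, the eigenvalue is $\lambda-\mu$. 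This is $\geq 0$ because every such $\mu$ is at most $\lambda_2(A_G)=\lambda$.
\end{itemize}
Hence $M$ is positive semidefinite.

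For the rank, the eigenspace of $A_G$ for $\lambda$ has dimension $m$. It lies in $\mathbf 1^\perp$, since $\lambda\neq k$ by simplicity of $k$ (for a connected graph with $n\ge2$ we have $\lambda<k$). This eigenspace is contained in $\ker M$, so $\operatorname{rank}M\le n-m$. Lemma~\ref{lemma:equivalent} now produces $n$ equiangular lines in $\mathbb R^{n-m}$ with common angle $\arccos(\alpha)=\arccos\bigl(\frac1{1+2\lambda}\bigr)$.

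The only subtle point is the requirement $\alpha\in(0,1)$, that is, $\lambda>0$. This holds whenever the statement is nondegenerate, and it will be the case in the application to Theorem~\ref{thm:counterexample}. If $\lambda\le 0$ one would handle it separately or exclude it; otherwise the argument is routine spectral bookkeeping.
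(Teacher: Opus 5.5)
Your proof is correct and follows essentially the same route as the paper: you diagonalize $M=\lambda I+\frac12 J-A_G$ using $\mathbf 1$ together with eigenvectors of $A_G$ orthogonal to it, read off the nonnegative eigenvalues $\lambda+\frac n2-k$ and $\lambda-\lambda_i(A_G)$, and bound the rank using the $m$-dimensional $\lambda$-eigenspace contained in $\ker M$, before invoking Lemma~\ref{lemma:equivalent}. Your caveat that one needs $\lambda>0$ for $\alpha=\frac{1}{1+2\lambda}$ to lie in $(0,1)$ is a valid point that the paper leaves implicit; it holds in the application, where $\lambda=2\cos\frac{\pi}{n}$.
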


\begin{proof}
    Define
    \[
        M:=\lambda I+\frac{1}{2}J-A_G.
    \]
Since \(G\) is \(k\)-regular, the all-ones vector \(\boldsymbol{1}\) is an
eigenvector of \(A_G\) with eigenvalue \(k\). Moreover, since \(G\) is
connected, this eigenvalue is simple, and every eigenvector corresponding to
an eigenvalue \(\lambda_i(A_G)\), \(i\geq 2\), is orthogonal to
\(\boldsymbol{1}\). Hence, if \(f\) is such an eigenvector, then \(Jf=0\).
Therefore
\[
    Mf=\left(\lambda-\lambda_i(A_G)\right)f.
\]
It follows that \(\lambda-\lambda_i(A_G)\) is an eigenvalue of \(M\) for each
\(i\geq 2\), with the same multiplicity as \(\lambda_i(A_G)\) as an eigenvalue
of \(A_G\).

    On the other hand, using the $k$-regularity of $G$, we have
    \[
        M\boldsymbol{1}
        =\left(\lambda I+\frac{1}{2}J-A_G\right)\boldsymbol{1}
        =\left(\lambda+\frac{n}{2}-k\right)\boldsymbol{1}.
    \]
    Therefore, the spectrum of $M$ is
    \[
        \left\{\lambda-k+\frac{n}{2}\right\}
        \cup
        \left\{\lambda-\lambda_i(A_G): 2\leq i\leq n\right\},
    \]
    where eigenvalues are counted with multiplicity.

    Since $\lambda$ is the second largest eigenvalue of $A_G$, we have
    \[
        \lambda-\lambda_i(A_G)\geq 0
        \qquad\text{for all } i\geq 2.
    \]
    Together with the assumption
    \[
        \lambda-k+\frac{n}{2}\geq 0,
    \]
    this implies that $M$ is positive semidefinite. Moreover, the nullity of $M$ is at least
    $m$, and hence
    \[
        \operatorname{rank}(M)\leq n-m.
    \]
    By Lemma~\ref{lemma:equivalent}, there exist $n$ equiangular lines in
    $\mathbb{R}^{n-m}$ with common angle
    \[
        \arccos\left(\frac{1}{1+2\lambda}\right).
    \]
\end{proof}
We now turn to the proof of Theorem \ref{thm:counterexample}.
\begin{proof}[Proof of Theorem \ref{thm:counterexample}]
Since $\alpha_n=\frac{1}{1+4\cos\frac{\pi}{n}}$, we have $\lambda_n=\frac{1-\alpha_n}{2\alpha_n}=2\cos\frac{\pi}{n}$. For any odd integer $n\geq 3$, by Lemma \ref{thm:smith}, the only graph with spectral radius $\lambda_n$ is $P_{n-1}$. This implies $\kappa_{\lambda_n}=n-1$. By Lemma \ref{lemma:Cn}, $\lambda_n$ is the second largest eigenvalue of $A(C_{2n})$ with multiplicity $2$. Because $C_{2n}$ is a connected $2n$-vertex $2$-regular graph with $\lambda_n-2+n\geq 0$, there exist $2n$ equiangular lines in $\mathbb{R}^{2n-2}$ with common angle $\arccos(\alpha_n)$ by Lemma \ref{lemma:equiangular}. By a direct computation, we have $2n>\frac{\kappa_{\lambda_n}(2n-3)}{\kappa_{\lambda_n}-1}$, once $n\geq 4$. Indeed, \[2n-\left\lfloor\frac{(n-1)(2n-3)}{n-2}\right\rfloor=1,\ \text{for any $n\geq 4$}.\] 

On the other hand, we observe for any $n\geq 2$ that $\alpha_n>\frac{1}{5}$, and hence $\frac{(1-\alpha_n^2)(1-2\alpha_n^2)}{2\alpha_n^4}<276$. Set $n_0=138$. Then for any odd integer $n>n_0$, we derive $$N_{\alpha_n}(2n-2)\geq2n> \max\left\{\frac{(1-\alpha_n^2)(1-2\alpha_n^2)}{2\alpha_n^4},\left\lfloor\frac{\kappa_{\lambda_n}(2n-3)}{\kappa_{\lambda_n}-1}\right\rfloor\right\}.$$
\end{proof}
\section{Concluding remarks}
In this paper, we prove that Balla's Conjecture \ref{conj:balla} holds for $\alpha\in\left\{\frac{1}{1+2\sqrt{3}},\sqrt{5}-2\right\}$ and fails for  infinitely many $\alpha$. As Balla noted, "Even if this ambitious conjecture is false in general, it would already be very interesting to
see if it holds in the case where $1/\alpha$ is an odd integer." Thus, the next natural and important question is whether Conjecture \ref{conj:balla} holds when $1/\alpha$ is odd. This leads to the following conjecture.
\begin{Conj}[Balla's conjecture for $1/\alpha$ is odd]
  For integer $k\geq 1$, we have $$N_{\frac{1}{2k+1}}(d)\leq \max\left\{\binom{4k^2+4k}{2},\left\lfloor\frac{(k+1)(d-1)}{k}\right\rfloor\right\},$$
for any $d\geq 1$.
\end{Conj}
For general $\alpha$, it is natural to ask the following question.
\begin{Question}
    Characterize all real numbers $\alpha\in(0,1)$ for which Conjecture \ref{conj:balla} holds.
\end{Question}
Notably, our counterexamples exceed the upper bound in Conjecture \ref{conj:balla} by only one. This leads us to pose the following question.
\begin{Question}
  Dose there exist an absolute constant $C$, such that for any $\alpha\in (0,1)$ with $\kappa_{\frac{1-\alpha}{2\alpha}}<\infty$, we have $$N_{\alpha}(d)\leq \max\left\{\frac{(1-\alpha^2)(1-2\alpha^2)}{2\alpha^4},\left\lfloor\frac{\kappa_{\frac{1-\alpha}{2\alpha}}(d-1)}{\kappa_{\frac{1-\alpha}{2\alpha}}-1}\right\rfloor+C\right\},$$
for any $d\geq 1$?
\end{Question}

\section*{Acknowledgement}
C.G. is supported by the Start-up Research Fund of Fuzhou University for the project "Research on Related Problems of Equiangular Line Sets Problem" (No. 511813-XRC-26048).
S.L. is supported by the Scientific Research Innovation Capability Support Project for Young Faculty SRICSPYF-ZY2025160 and the National Natural Science Foundation of China No. 12431004. 

\bibliographystyle{abbrv}
\bibliography{references}
\end{document}